\def\qed{\hfill$\Box$\vspace{12pt}}
\long\def\delete#1{}
\newcommand{\pmat}[1]{\begin{pmatrix}#1\end{pmatrix}}
\newcommand{\dmat}[1]{\begin{vmatrix}#1\end{vmatrix}}
\newcommand{\be}{\begin{equation}}
\newcommand{\ee}{\end{equation}}
\newcommand{\bea}{\begin{eqnarray}}
\newcommand{\eea}{\end{eqnarray}}
\newcommand{\bean}{\begin{eqnarray*}}
\newcommand{\eean}{\end{eqnarray*}}
\def\deg{{\rm deg}}
\newtheorem{thm}{Theorem}[section]
\newtheorem{cor}[thm]{Corollary}
\newtheorem{lem}[thm]{Lemma}
\newtheorem{prop}[thm]{Proposition}
\numberwithin{equation}{section}
\title{Spectral characterizations of propeller graphs}
\author{Xiaogang Liu \quad and \quad Sanming Zhou
\\
{\small Department of Mathematics and Statistics}\\
{\small The University of Melbourne}\\
{\small Parkville, VIC 3010, Australia}\\
{\small xiaogliu@student.unimelb.edu.au, smzhou@ms.unimelb.edu.au}}
\date{}
\begin{document}

\openup 0.5\jot
\maketitle

\begin{abstract}
A \emph{propeller graph} is obtained from an $\infty$-graph by attaching a path to the vertex of degree four, where an $\infty$-graph consists of two cycles with precisely one common vertex. In this paper, we prove that all propeller graphs are determined by their Laplacian spectra as well as their signless Laplacian spectra.

\bigskip

\noindent\textbf{Keywords:} $L$-spectrum, $Q$-spectrum, $L$-DS graph, $Q$-DS graph, $L$-cospectral graph, $Q$-cospectral graph

\bigskip

\noindent{{\bf AMS Subject Classification (2010):} 05C50}
\end{abstract}

\section{Introduction}

All graphs considered in the paper are undirected and simple. Let $G=(V(G),E(G))$ be a
graph with vertex set $V(G)=\{v_1,v_2,\ldots,v_n\}$ and edge set
$E(G)$. The \emph{adjacency matrix} of $G$, denoted by $A(G)$, is the $n \times n$ matrix whose $(i,j)$-entry is $1$ if $v_i$ and $v_j$ are adjacent and $0$ otherwise. Denote by $d_i=d_G(v_i)$ the degree of $v_i$ in $G$, and by
$$
\deg(G) = (d_1,d_2,\ldots,d_n)
$$
the degree sequence of $G$. The \emph{Laplacian matrix} of $G$ is defined as $L(G)=D(G)-A(G)$, where $D(G)$ is the diagonal matrix with diagonal entries $d_1,d_2,\ldots,d_n$. We call $Q(G)=D(G)+A(G)$ the \emph{signless Laplacian matrix} of $G$.
Denote the eigenvalues of $A(G)$, $L(G)$ and $Q(G)$ by $\lambda_1\geq\lambda_2\geq\cdots\geq\lambda_n$,
$\mu_1\geq\mu_2\geq\cdots\geq\mu_n$ and $\nu_1\geq\nu_2\geq\cdots\geq\nu_n$, respectively.
The collection of eigenvalues of $A(G)$ together with multiplicities are called the \emph{$A$-spectrum} of $G$. Two graphs are said to be \emph{$A$-cospectral} if they have the same $A$-spectrum. A graph is called an \emph{$A$-DS graph} if it is \emph{determined by its $A$-spectrum}, meaning that there exists no other graph that is non-isomorphic to it but $A$-cospectral with it. Similar terminology will be used for $L(G)$ and $Q(G)$. So we can speak of \emph{$L$-spectrum}, \emph{$Q$-spectrum}, \emph{$L$-cospectral graphs}, \emph{$Q$-cospectral graphs}, \emph{$L$-DS graphs} and \emph{$Q$-DS graphs}.

Which graphs are determined by their spectra? This is a classical question in spectral graph theory which was raised by G\"{u}nthard and Primas \cite{kn:Gunthard56} in 1956 with motivations from chemistry. This problem is also related to complexity theory. It is well-known that the complexity of the problem of determining graph isomorphism is unknown \cite{kn:Garey79}. Since checking whether two graphs are cospectral can be done in polynomial time, the isomorphism problem can be reduced to the one of checking isomorphism between cospectral graphs. Up to now, many graphs have been proved to be determined by their ($A$, $L$ or/and $Q$) spectra \cite{kn:Boulet08,kn:Boulet09,kn:Cvetkovic95,kn:Cvetkovic10,kn:vanDam03,kn:vanDam09,kn:Haemers08,kn:Lu09,kn:LiuM10,kn:Wangliu10,kn:Mirzakhah10,kn:Omidi07,kn:Wang10,kn:Zhou12}. However, the problem of determining $A$-DS (respectively, $L$-DS, $Q$-DS) graphs is still far from being completely solved. Therefore, finding new families of DS graphs deserves further attention in order to enrich our database of DS graphs. Unfortunately, even for some simple-looking graphs, it is often challenging to determine whether they are $A$-DS, $L$-DS or $Q$-DS.
\begin{figure}
\centering
\includegraphics[height=6cm]{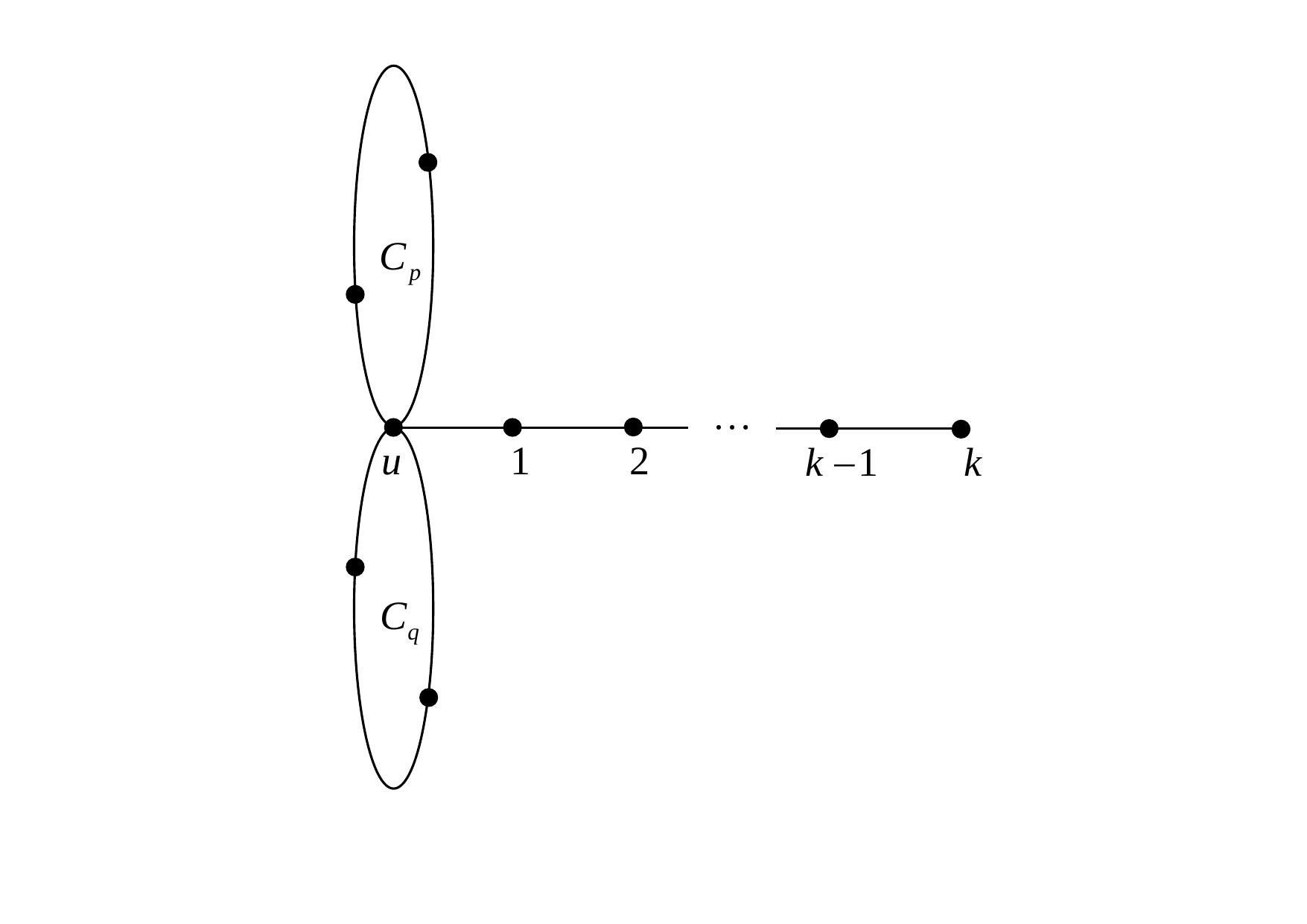}
\vspace{-0.8cm}
\caption{\small A propeller graph.}
\label{f1}
\end{figure}

In this paper we give a new family graphs that are both $L$-DS and $Q$-DS. We define a \emph{propeller graph} (see Fig. \ref{f1}) as a graph obtained from an $\infty$-graph by attaching a path to the vertex of degree 4, where an \emph{$\infty$-graph} is a graph consisting of two cycles with exactly one vertex in common \cite{kn:Wang10}. The main results of this paper are as follows.

\begin{thm}\label{thm1}
All propeller graphs are determined by their $L$-spectra.
\end{thm}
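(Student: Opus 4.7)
The plan is to show that if $H$ is L-cospectral with a propeller graph $P$, then $H\cong P$. Standard L-spectral invariants give $|V(H)|=|V(P)|=n$, $|E(H)|=\frac{1}{2}\sum\mu_i=n+1$, and connectedness of $H$ (the multiplicity of $0$ as an L-eigenvalue counts components). Hence $H$ is a connected bicyclic graph. Using $(L^2)_{ii}=d_i^2+d_i$, the identity $\text{tr}(L^2)=\sum d_i^2+2m$ shows $\sum_i d_i(H)^2=\sum_i d_i(P)^2$, and an analogous expansion of $\text{tr}(L^3)$ yields $\sum d_i^3-6t$ (with $t$ the triangle count) as a further L-invariant. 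Combined with the bicyclic constraint $m=n+1$ and the fact that a bicyclic graph has at most two triangles, these moment identities together with a finite case analysis should pin down the degree sequence of $H$ to the one of $P$: one hub vertex of degree $5$ (the common vertex of the two cycles to which the path is attached), one pendant vertex (the tip of the path), and $n-2$ vertices of degree $2$; a further moment $\text{tr}(L^4)$ may be required to rule out sporadic alternatives.

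A connected bicyclic graph has a cyclic core---its maximal subgraph of minimum degree at least $2$---isomorphic to an $\infty$-graph, a $\theta$-graph, or a dumbbell graph (two vertex-disjoint cycles joined by a path), with pendant trees attached. A $\theta$ or dumbbell core would produce at least two vertices of degree $\geq 3$ in $H$, contradicting the degree sequence $(5,2^{n-2},1)$ just derived; so the core is an $\infty$-graph whose common vertex must be the hub of degree $5$. The single-pendant constraint then forces the attached tree to be a single path hanging from the hub. Therefore $H$ is itself a propeller graph, with cycle lengths and path length $(p',q',k')$ satisfying $p'+q'+k'=p+q+k$.

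It remains to match $(p,q,k)$ with $(p',q',k')$ up to swapping $p$ and $q$, which I expect to be the main obstacle. The spanning-tree count $\tau(G)=\frac{1}{n}\prod_{i=1}^{n-1}\mu_i$ equals $pq$ for a propeller, so L-cospectrality yields $pq=p'q'$. The L-eigenvalues of $C_p,C_q$ and of the pendant path $P_{k+1}$ are the explicit values $2-2\cos(2\pi j/p)$, $2-2\cos(2\pi j/q)$ and $2-2\cos(j\pi/(k+1))$; an equitable-partition / divisor argument exploiting the mirror symmetry within each cycle should show that many of these survive as L-eigenvalues of the propeller itself, and a closed-form (or structured factorization) of the L-characteristic polynomial of a generic propeller---obtainable by a row/column reduction at the hub---should then let one read off $(p,q,k)$ from the spectrum. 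The finitely many integer triples $(p',q',k')$ compatible with $p'q'=pq$ and $p'+q'+k'=p+q+k$ can then be eliminated one by one, completing the proof.
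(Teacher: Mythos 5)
Your overall skeleton (L-invariants $\Rightarrow$ degree sequence $\Rightarrow$ structure $\Rightarrow$ parameter matching) parallels the paper's, but there are two concrete gaps. First, the moment identities do \emph{not} pin the degree sequence down to $(5,2^{n-2},1)$ in general. When $p=q=3$ the propeller has $n_3(G)=2$ triangles, and the invariant $6n_3-\sum d_i^3$ permits a triangle-free $H$ with degree sequence $(4^2,2^{n-4},1^2)$: since $\sum d_G(v)^3=8n+110$ and $\sum d_H(v)^3=8n+98$, one gets $6n_3(H)=6n_3(G)-12=0$, which is perfectly feasible. Such an $H$ has two vertices of degree at least $3$, so your ``core must be an $\infty$-graph'' argument collapses; $H$ could a priori be a dumbbell- or $\theta$-shaped bicyclic graph with pendant paths. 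The paper eliminates this case not by moments but by enumerating the three possible shapes of a connected triangle-free bicyclic graph with that degree sequence and observing that each has more than $9$ spanning trees, whereas $\tau(G)=pq=9$; you have the spanning-tree invariant in hand but never deploy it for this purpose, and your hope that $\mathrm{tr}(L^4)$ ``may be required'' is not substantiated.

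Second, the final parameter-matching step is genuinely incomplete. The constraints $p'+q'+k'=p+q+k$ and $p'q'=pq$ do not determine $(p,q,k)$ up to swapping $p$ and $q$: for instance $(p,q)=(8,3)$ and $(p',q')=(6,4)$ have equal products, and the sums differ by $1$, which is absorbed by taking $k'=k+1$; the two propellers then have the same order and the same number of spanning trees. So finer spectral information is unavoidable, and your proposed ``equitable partition / read off surviving cycle eigenvalues'' step is exactly where the real work lies; it is left as a sketch, and extracting $p$ and $q$ separately from a multiset of values $2-2\cos(2\pi j/p)$ and $2-2\cos(2\pi j/q)$ is delicate when the cycle lengths share divisors. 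The paper instead derives a closed form for $\phi(L(G))$ via the substitution $y^2-(x-2)y+1=0$, packaging the parameter dependence into an explicit expression $f_L(p,q,k;y)$, and identifies $q$ or $k$ by comparing the lowest-degree surviving terms. Until an argument of comparable strength is carried out, the proof is not complete.
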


\begin{thm}\label{thm2}
All propeller graphs are determined by their $Q$-spectra.
\end{thm}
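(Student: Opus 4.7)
\textbf{Plan for Theorem \ref{thm2}.} Let $P = P_{a,b,\ell}$ denote a propeller graph with cycles of lengths $a$ and $b$ sharing a central vertex $v$, and a pendant path of length $\ell\geq 1$ attached at $v$. Then $n:=|V(P)|=a+b+\ell-1$, $m:=|E(P)|=a+b+\ell=n+1$, and the degree sequence is $(5,2,\ldots,2,1)$ with one vertex of each of degrees $5$ and $1$. Suppose $H$ is $Q$-cospectral with $P$; my aim is to show $H\cong P$.

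I would first harvest every standard invariant that the $Q$-spectrum delivers. From $\sum_i\nu_i=2m$ and the fact that the multiset of eigenvalues already fixes $n$, both $n(H)$ and $m(H)$ agree with those of $P$, so $m(H)=n(H)+1$ (bicyclic, once connectedness is known). The identity $\sum_i\nu_i^2=2m+\sum_i d_i^2$ fixes $\sum_i d_i^2(H)$, and the multiplicity of $0$ as a $Q$-eigenvalue equals the number of bipartite components of $H$, which together with the parity of $a,b$ restricts the component structure. Combining these with simple bounds on $\sum d_i^2$ under the constraints $\sum d_i=2(n+1)$ and $d_i\geq 0$, I expect to rule out disconnectedness; this is a fairly routine case analysis (any isolated vertex or small component forces a contradiction with the known moments).

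Next I would pin down the degree sequence itself. Given $n$, $\sum d_i=2(n+1)$, and $\sum d_i^2$, a convexity argument together with the integrality of degrees forces the sequence to be exactly $(5,2^{n-2},1)$, matching that of $P$. For a connected bicyclic graph with this degree sequence, the degree-$5$ vertex $u$ is the unique branching point: the five edges at $u$ initiate five chains through degree-$2$ vertices, each of which must either terminate at the unique degree-$1$ vertex or return to $u$. Exactly one chain reaches the pendant, and the remaining four pair up at $u$ into two cycles. Hence $H$ is itself a propeller graph $P_{a',b',\ell'}$ for some parameters with $a'+b'+\ell'=a+b+\ell$.

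The final and hardest step is to deduce $\{a',b'\}=\{a,b\}$ and $\ell'=\ell$. Here I would use a closed-form expression for $\phi_Q(P_{a,b,\ell},x)=\det(xI-Q(P_{a,b,\ell}))$ obtained by a pendant-vertex / pendant-path reduction, writing it in terms of $\phi_Q$ of cycles and paths, whose signless Laplacian characteristic polynomials have classical closed forms. Comparing $\phi_Q(P_{a,b,\ell},x)$ with $\phi_Q(P_{a',b',\ell'},x)$ then reduces to an algebraic identity that can be shown to force the parameter triples to coincide (up to swapping the cycles). As a cross-check and often as a quicker route, higher spectral moments $\mathrm{tr}(Q^k)$ for small $k$ encode counts of short closed walks, from which one can successively read off the numbers of triangles and $4$-cycles, and thereby whether $a$ or $b$ equals $3,4,\ldots$. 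The main obstacle is exactly this parameter-separation step: distinct propeller graphs could, a priori, be $Q$-cospectral, so the algebra in comparing the characteristic polynomials has to be carried out with care, likely paralleling but being more delicate than the corresponding argument used for Theorem \ref{thm1}, since $Q$ lacks some of the convenient combinatorial interpretations (such as counting spanning trees) that $L$ enjoys.
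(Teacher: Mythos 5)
There is a genuine gap at the step where you claim that $n$, $\sum_i d_i = 2(n+1)$ and $\sum_i d_i^2$ ``force the sequence to be exactly $(5,2^{n-2},1)$ by a convexity argument together with integrality.'' This is false: the first two moments of the degree sequence do not separate the candidates. For example, $(4^2,2^{n-4},1^2)$, $(4,3^3,2^{n-7},1^3)$, $(3^6,2^{n-10},1^4)$, and even the disconnected candidates $(4,3^2,2^{n-4},0)$ and $(3^5,2^{n-7},1,0)$ all have the same number of vertices, the same degree sum $2n+2$, and the same degree-square sum $4n+18$ as $(5,2^{n-2},1)$. The paper's Lemma \ref{Qlem1} records precisely these six possibilities, and eliminating the five unwanted ones is the bulk of the proof of Theorem \ref{thm2}: it requires the interlacing argument $\lambda_2(\mathcal{S}(H))<2$ (so $H$ has no two vertex-disjoint cycles), the relation between $n_3(H)$ and $\sum d_H(v)^3$ from the third $Q$-moment, fourth spectral moments of the line graphs $\mathcal{L}(G)$ and $\mathcal{L}(H)$ via Lemma \ref{closedfourwalk}, and in residual subcases explicit characteristic polynomials and the coefficient $q_{n-1}$ computed from $TU$-subgraphs. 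Your related claim that disconnectedness can be ruled out by ``routine'' moment bounds is likewise too optimistic: the $Q$-spectrum does not determine the number of components (only the number of bipartite ones), which is exactly why the isolated-vertex degree sequences survive to the case analysis and must be killed by the bipartiteness/triangle-count argument.

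Your remaining steps are sound and broadly parallel the paper. The reconstruction of a propeller from a connected bicyclic graph with degree sequence $(5,2^{n-2},1)$ is correct, and your final parameter-separation step (comparing closed forms of $\phi(Q(P_{a,b,\ell}))$) would work, though the paper takes a cleaner route: since the subdivision of a propeller graph is again a propeller graph, Lemma \ref{QA} reduces $Q$-cospectrality of propellers to $A$-cospectrality of propellers, which is settled by the generating-function identity (\ref{AUn1}) together with the evaluation $\phi(A(G);2)=-(3k+2)pq$. You should adopt that reduction, or be prepared for a comparable amount of algebra; but the essential missing content is the elimination of the competing degree sequences.
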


Since the $L$-spectrum of a graph determines that of its complement \cite{kn:Kelmans65}, Theorem \ref{thm1} implies that the complement of any propeller graph is also determined by its $L$-spectrum.

We will prove Theorems \ref{thm1} and \ref{thm2} in Sections \ref{sec:L} and \ref{sec:Q}, respectively.

\section{Preliminaries}

In this section we collect some known results that will be used in the proof of Theorems \ref{thm1} and \ref{thm2}. Denote by
$$
\phi(M)=\phi(M;x)=\det(xI-M)=l_0x^n+l_1x^{n-1}+\dots+l_n
$$
the characteristic polynomial of an $n \times n$ matrix $M$, where $I$ is
the identity matrix of the same size. In particular, for a graph $G$, we call $\phi(A(G))$  (respectively, $\phi(L(G))$, $\phi(Q(G))$) the \emph{adjacency} (respectively, \emph{Laplacian}, \emph{signless Laplacian}) \emph{characteristic polynomial} of $G$.

Denote by $n_3(G)$ the number of triangles in $G$.

\begin{lem}\label{Lcoefficients}\emph{\cite{kn:Oliveira02}}
Let $G$ be a graph with $n$ vertices and $m$ edges, and let
$\mathrm{deg}(G)=(d_1,d_2,\dots ,d_n)$ be its degree
sequence. Then the first four coefficients in $\phi(L(G))$ are:
\[
    l_0=1,\qquad l_1=-2m,\qquad l_2=2m^2-m-\frac{1}{2}\sum_{i=1}^{n}d_i^2,
\]
\[
    l_3=\frac{1}{3}\left(-4m^3+6m^2+3m\sum_{i=1}^{n}d_i^2-\sum_{i=1}^{n}d_i^3-3\sum_{i=1}^{n}d_i^2+6n_3(G)
    \right).
\]
\end{lem}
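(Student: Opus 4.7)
The plan is to use the standard expansion of the characteristic polynomial in terms of principal minors. Write $L=L(G)$ and recall that for any $n\times n$ matrix $M$,
\[
\det(xI-M)=\sum_{k=0}^{n}(-1)^{k}e_{k}(M)\,x^{n-k},
\]
where $e_{k}(M)$ is the sum of all $k\times k$ principal minors of $M$. Thus $l_{0}=1$, $l_{1}=-e_{1}(L)$, $l_{2}=e_{2}(L)$, $l_{3}=-e_{3}(L)$. For $L$ the entries are $L_{ii}=d_{i}$ and, for $i\neq j$, $L_{ij}=-a_{ij}$, where $a_{ij}=1$ if $v_{i}v_{j}\in E(G)$ and $a_{ij}=0$ otherwise; in particular $L_{ij}L_{ji}=a_{ij}^{2}=a_{ij}$.

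First I would dispatch $l_{0}$, $l_{1}$, $l_{2}$ by direct computation. The case $l_{0}=1$ is trivial, and $l_{1}=-\operatorname{tr}(L)=-\sum_{i}d_{i}=-2m$ by the handshake lemma. For $l_{2}$, I would expand each $2\times 2$ principal minor $L_{ii}L_{jj}-L_{ij}L_{ji}=d_{i}d_{j}-a_{ij}$ and sum, obtaining
\[
l_{2}=\sum_{i<j}d_{i}d_{j}-m=\tfrac{1}{2}\bigl((2m)^{2}-\textstyle\sum_{i}d_{i}^{2}\bigr)-m=2m^{2}-m-\tfrac{1}{2}\sum_{i}d_{i}^{2},
\]
which is the stated expression.

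The bulk of the work is $l_{3}$. For $S=\{i,j,k\}$, I would expand
\[
\det(L_{S})=d_{i}d_{j}d_{k}+2a_{ij}a_{jk}a_{ki}-d_{i}a_{jk}-d_{j}a_{ik}-d_{k}a_{ij},
\]
using $a_{ij}^{2}=a_{ij}$ and noting that the product $a_{ij}a_{jk}a_{ki}$ equals $1$ precisely when $\{v_{i},v_{j},v_{k}\}$ spans a triangle, so summing over all triples contributes $2n_{3}(G)$. For the degree-triple sum I would use the elementary symmetric-to-power-sum identity
\[
\sum_{i<j<k}d_{i}d_{j}d_{k}=\tfrac{1}{6}\Bigl((\textstyle\sum d_{i})^{3}-3(\sum d_{i})(\sum d_{i}^{2})+2\sum d_{i}^{3}\Bigr)=\tfrac{4}{3}m^{3}-m\sum d_{i}^{2}+\tfrac{1}{3}\sum d_{i}^{3}.
\]
For the mixed term, I would swap the order of summation: the contribution of an edge $uv\in E(G)$ is $\sum_{w\neq u,v}d_{w}=2m-d_{u}-d_{v}$, and summing over edges gives
\[
\sum_{i<j<k}\bigl(d_{i}a_{jk}+d_{j}a_{ik}+d_{k}a_{ij}\bigr)=\sum_{uv\in E(G)}(2m-d_{u}-d_{v})=2m^{2}-\sum_{i}d_{i}^{2},
\]
since $\sum_{uv\in E}(d_{u}+d_{v})=\sum_{v}d_{v}^{2}$.

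Finally I would assemble these pieces. Combining
\[
e_{3}(L)=\Bigl(\tfrac{4}{3}m^{3}-m\textstyle\sum d_{i}^{2}+\tfrac{1}{3}\sum d_{i}^{3}\Bigr)+2n_{3}(G)-\Bigl(2m^{2}-\sum d_{i}^{2}\Bigr)
\]
and negating yields exactly the claimed formula for $l_{3}$. The main obstacle is bookkeeping: getting the sign of the triangle term right (the triple product of off-diagonal entries of $L$ is $(-1)^{3}=-1$ per triangle, hence contributes $-2n_{3}(G)$ to $e_{3}(L)$, i.e.\ $+2n_{3}(G)$ to $l_{3}$) and, at the end, algebraically reorganising the result into the symmetric form displayed in the lemma.
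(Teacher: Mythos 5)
The paper gives no proof of this lemma at all---it is imported verbatim from the cited reference \cite{kn:Oliveira02}---so there is nothing of the authors' to compare your argument against. Your derivation via the principal-minor expansion $\det(xI-L)=\sum_{k}(-1)^{k}e_{k}(L)x^{n-k}$ is the standard and essentially the only natural way to prove it, and every ingredient checks out: the computations of $l_0,l_1,l_2$, the symmetric-function identity for $\sum_{i<j<k}d_id_jd_k$, the edge resummation $\sum_{uv\in E}(2m-d_u-d_v)=2m^{2}-\sum_i d_i^{2}$, and the identification of $\sum_{i<j<k}a_{ij}a_{jk}a_{ki}$ with $n_3(G)$. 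The result is a complete, self-contained proof of a statement the paper only cites.

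The one blemish is a sign inconsistency that you flag in your closing sentence but do not propagate back into the displays. In the $3\times3$ principal minor, each of the two cyclic terms is a product of three off-diagonal entries of $L$, each equal to $-a_{\cdot\cdot}$, so the correct expansion is
\[
\det(L_S)=d_id_jd_k-d_ia_{jk}-d_ja_{ik}-d_ka_{ij}-2a_{ij}a_{jk}a_{ki},
\]
whence $e_3(L)$ contains $-2n_3(G)$ and $l_3=-e_3(L)$ contains $+2n_3(G)$, as the lemma asserts. Your displayed $\det(L_S)$ and your assembled expression for $e_3(L)$ both carry $+2n_3(G)$; taken literally, negating that expression would put $-2n_3(G)$ into $l_3$ and fail to match the stated formula. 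Since your final parenthetical states the correct signs, this is a transcription slip rather than a conceptual gap, but the two displays should be corrected to agree with it.
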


The following result follows from \cite{kn:vanDam03} and Lemma \ref{Lcoefficients}.

\begin{lem}\label{spectrum}
Let $G$ be a graph. The following can be determined by its $L$-spectrum:
\begin{itemize}
\item[\rm (a)] the number of vertices of $G$;
\item[\rm (b)] the number of edges of $G$;
\item[\rm (c)] the number of components of $G$;
\item[\rm (d)] the number of spanning trees of $G$.
\end{itemize}
\end{lem}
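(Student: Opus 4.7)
The plan is to handle the four items in turn; each follows from standard facts together with Lemma~\ref{Lcoefficients}, so no single step should be truly hard.

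First I would read off (a) immediately: the $L$-spectrum, by definition, is a multiset of $n$ numbers where $n$ is the order of $L(G)$; hence the cardinality of the $L$-spectrum is the number of vertices.

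Next, for (b) I would use Lemma~\ref{Lcoefficients}. Since
\[
l_1=-\sum_{i=1}^{n}\mu_i,
\]
and Lemma~\ref{Lcoefficients} gives $l_1=-2m$, the number of edges is recovered as $m=\tfrac{1}{2}\sum_{i=1}^{n}\mu_i$, which is manifestly a spectral invariant.

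For (c) I would invoke the classical fact that $L(G)$ is positive semidefinite and that the multiplicity of $0$ as an eigenvalue of $L(G)$ equals the number of connected components of $G$; this is proved by observing that the kernel of $L(G)$ is spanned by the characteristic vectors of the components. Thus counting the occurrences of $0$ in the $L$-spectrum returns the number of components.

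Finally, for (d) I would appeal to Kirchhoff's Matrix-Tree Theorem, which expresses the number of spanning trees $\tau(G)$ in terms of the nonzero $L$-eigenvalues: if $G$ is connected, $\tau(G)=\tfrac{1}{n}\mu_1\mu_2\cdots\mu_{n-1}$, and otherwise $\mu_{n-1}=0$ by (c), making the formula yield $0$ as required. Hence $\tau(G)$ is determined by the $L$-spectrum. The ``main obstacle'' here is really just citing the correct classical results (found in \cite{kn:vanDam03}); there is no substantive computation beyond what Lemma~\ref{Lcoefficients} already provides.
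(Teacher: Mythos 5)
Your proof is correct and follows essentially the same route as the paper, which simply cites van Dam--Haemers \cite{kn:vanDam03} together with Lemma~\ref{Lcoefficients}: the order from the size of the spectrum, the edge count from the trace (equivalently $l_1=-2m$), the component count from the multiplicity of the eigenvalue $0$, and the spanning-tree count from the Matrix-Tree Theorem. You have merely written out explicitly the standard facts the paper leaves to the citation.
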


\begin{lem}\label{cycle}\emph{\cite{kn:Cvetkovic95}}
Let $u$ be a vertex of $G$, $N(u)$ the set of vertices of $G$ adjacent to $u$, and $C(u)$ the set of cycles of $G$ containing $u$. Then
\begin{eqnarray*}
\phi(A(G);x)&=&x \phi(A(G-u);x)-\sum_{v\in N(u)}\phi(A(G-u-v);x)-2\sum_{Z\in C(u)}\phi(A(G - V(Z));x).
\end{eqnarray*}
\end{lem}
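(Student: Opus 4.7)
The plan is to prove the identity via Sachs' coefficient formula for the characteristic polynomial of an adjacency matrix. Recall that if we write $\phi(A(G);x)=\sum_{k=0}^{n}a_k(G)\,x^{n-k}$, then Sachs' theorem expresses
$$a_k(G)=\sum_{H\in\mathcal{H}_k(G)}(-1)^{p(H)}\,2^{c(H)},$$
where $\mathcal{H}_k(G)$ is the set of \emph{elementary subgraphs} of $G$ on $k$ vertices (disjoint unions of edges and cycles covering their vertex sets), $p(H)$ is the total number of components, and $c(H)$ is the number of cycle components. Since both sides of the claimed identity are polynomials in $x$ of degree $n$, it suffices to compare the coefficient of $x^{n-k}$ on each side for every $k$.

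My strategy is to classify the elementary subgraphs $H\in\mathcal{H}_k(G)$ according to the role played by the distinguished vertex $u$. Exactly one of the following holds: (i) $u\notin V(H)$; (ii) $u$ lies in an edge-component $\{u,v\}$ of $H$ for some $v\in N(u)$; or (iii) $u$ lies in a cycle-component $Z\in C(u)$ of $H$. In case (i), $H$ is an elementary subgraph of $G-u$ on $k$ vertices, contributing $a_k(G-u)$. In case (ii), removing the edge $\{u,v\}$ leaves an elementary subgraph $H'$ of $G-u-v$ on $k-2$ vertices, and since $p(H)=p(H')+1$ and $c(H)=c(H')$, the total contribution (summed over $v\in N(u)$) is $-\sum_{v\in N(u)}a_{k-2}(G-u-v)$. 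In case (iii), removing $Z$ leaves an elementary subgraph $H'$ of $G-V(Z)$ on $k-|V(Z)|$ vertices, and since $p(H)=p(H')+1$, $c(H)=c(H')+1$, the total contribution (summed over $Z\in C(u)$) is $-2\sum_{Z\in C(u)}a_{k-|V(Z)|}(G-V(Z))$. Adding these three quantities yields precisely $a_k(G)$.

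To finish, I translate the coefficient identity back to polynomials. Since $|V(G-u)|=n-1$, $|V(G-u-v)|=n-2$, and $|V(G-V(Z))|=n-|V(Z)|$, the terms
$$x\,\phi(A(G-u);x),\quad \phi(A(G-u-v);x),\quad \phi(A(G-V(Z));x)$$
contribute, respectively, $a_k(G-u)$, $a_{k-2}(G-u-v)$, and $a_{k-|V(Z)|}(G-V(Z))$ as the coefficient of $x^{n-k}$. Summing with the appropriate signs and the factor $2$ in case (iii) gives exactly the formula asserted in Lemma \ref{cycle}.

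The only real technical point is bookkeeping the signs and the factor $2$: every time a cycle component is created, the sign flips (from the extra component) \emph{and} a factor of $2$ appears (from the extra cycle), whereas an added edge component only contributes a sign flip. Once this is tracked consistently through Sachs' formula, the identity is immediate; there is no genuine obstacle, and the argument is purely combinatorial.
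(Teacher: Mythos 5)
Your proof is correct. The paper itself gives no proof of this lemma---it is quoted from Cvetkovi\'c--Doob--Sachs \cite{kn:Cvetkovic95} (it is essentially Schwenk's vertex-deletion formula)---and the argument you give, classifying the Sachs (elementary) subgraphs on $k$ vertices by whether $u$ is absent, lies in an edge-component $\{u,v\}$, or lies in a cycle-component $Z$, is exactly the standard proof in that source. The sign and factor-of-$2$ bookkeeping ($p(H)=p(H')+1$ in cases (ii) and (iii), $c(H)=c(H')+1$ only in case (iii)) is handled correctly, and the translation back from coefficients to polynomials via the degree shifts $n-1$, $n-2$, $n-|V(Z)|$ is accurate.
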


\begin{lem}\label{Ldegrees}\emph{\cite{kn:Wang10}}
Let $G$ be a graph with $n$ vertices, $m$ edges and degree sequence $\mathrm{deg}(G)=(d_1,d_2,\ldots,d_n)$. If a graph $H$ with degree sequence $\mathrm{deg}(H)=(d_1+t_1,d_2+t_2,\dots,d_n+t_n)$ is $L$-cospectral (respectively, $Q$-cospectral) with $G$, then $t_1,t_2,\dots,t_n$ are integers such that
\[\sum_{i=1}^nt_i=0~~ \text{and}~~\sum_{i=1}^n(t_i^2+2d_it_i)=0.\]
\end{lem}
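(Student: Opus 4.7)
\medskip
\noindent\textbf{Proof proposal.} The plan is to exploit the fact that $L$-cospectral (respectively, $Q$-cospectral) graphs share not just the spectrum but also every symmetric function of the eigenvalues, in particular the first two power sums $\sum_i \mu_i$ and $\sum_i \mu_i^2$ (respectively, $\sum_i \nu_i$ and $\sum_i \nu_i^2$). These power sums equal the traces of $L$ and $L^2$ (respectively, $Q$ and $Q^2$), and both traces can be expressed purely in terms of the degree sequence and the edge count. Equating the corresponding traces of $G$ and $H$ will then yield the two claimed identities.

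More concretely, the first step is to compute $\mathrm{trace}(L(G)) = \sum_i d_i = 2m$ and $\mathrm{trace}(L(G)^2)$. Expanding $L^2 = (D-A)^2 = D^2 - DA - AD + A^2$ and using $\mathrm{trace}(DA) = \mathrm{trace}(AD) = 0$ (since $A$ has zero diagonal) together with $\mathrm{trace}(A^2) = 2m$, one obtains $\mathrm{trace}(L(G)^2) = \sum_i d_i^2 + 2m$. For the signless Laplacian $Q = D+A$ the analogous expansion gives exactly the same formula: $\mathrm{trace}(Q(G)) = 2m$ and $\mathrm{trace}(Q(G)^2) = \sum_i d_i^2 + 2m$. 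So the two cases can be handled uniformly.

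The second step is to apply these identities to $H$. Since $G$ and $H$ are $L$-cospectral (or $Q$-cospectral), they have the same number of edges (by Lemma \ref{spectrum}(b), which also follows directly from $\mathrm{trace}(L) = 2m$). Hence $\mathrm{trace}(L(G)) = \mathrm{trace}(L(H))$ gives $\sum_i d_i = \sum_i (d_i + t_i)$, i.e.\ $\sum_i t_i = 0$. Similarly, $\mathrm{trace}(L(G)^2) = \mathrm{trace}(L(H)^2)$ combined with the equality of $m$ yields $\sum_i d_i^2 = \sum_i (d_i + t_i)^2$, which after expansion is exactly $\sum_i (t_i^2 + 2 d_i t_i) = 0$. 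The same calculation works verbatim with $Q$ in place of $L$. Finally, integrality of each $t_i$ is immediate: both $d_i$ and $d_i + t_i$ are vertex degrees, hence non-negative integers, so $t_i \in \mathbb{Z}$.

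There is no real obstacle here; the statement is essentially a bookkeeping consequence of the first two Newton power sums. The only point that requires a moment's care is the trace computation for $L^2$ (and $Q^2$), where one must remember that $A$ has zero diagonal so that the cross terms $DA$ and $AD$ contribute nothing to the trace, making the formulas for $L$ and $Q$ coincide and allowing the two halves of the lemma to be proved by a single argument.
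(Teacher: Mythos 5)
Your argument is correct and is essentially the standard one: the paper itself cites this lemma from \cite{kn:Wang10} without reproducing a proof, and the cited proof is the same trace computation, namely that $\mathrm{trace}(L)=\mathrm{trace}(Q)=2m$ and $\mathrm{trace}(L^2)=\mathrm{trace}(Q^2)=2m+\sum_i d_i^2$ (cf.\ Lemmas \ref{Lcoefficients} and \ref{degrees}), so equating the first two spectral moments of $G$ and $H$ yields both identities. Your observation that the zero diagonal of $A$ kills the cross terms, making the $L$ and $Q$ cases identical, is exactly the right point of care.
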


Denote by $P_n$ and $C_n$ the path and cycle on $n$ vertices,
respectively. Let $B_n$ be the matrix of order $n$ obtained from
$L(P_{n+1})$ by deleting the row and column corresponding to one
end vertex of $P_{n+1}$, and $U_n$ be the matrix of order $n$
obtained from $L(P_{n+2})$ by deleting the rows and columns
corresponding to the two end vertices of $P_{n+2}$.

\begin{lem}\label{Lpolynomial1}\emph{\cite{kn:Guo08}}
Set $\phi(L(P_0))=0$, $\phi(B_0)=1$, $\phi(U_0)=1$. Then
\begin{itemize}
\item[\rm (a)] $\phi(L(P_{n+1}))=(x-2)\phi(L(P_n))-\phi(L(P_{n-1})),\ (n\ge 1)$;
\item[\rm (b)] $x\phi(B_n)=\phi(L(P_{n+1}))+\phi(L(P_n))$;
\item[\rm (c)] $\phi(L(P_n))=x\phi(U_{n-1}),\ (n\ge 1)$;
\item[\rm (d)] $\phi(L(C_n))=\frac{1}{x}\phi(L(P_{n+1}))-\frac{1}{x}\phi(L(P_{n-1}))+2(-1)^{n+1},\
(n\ge3)$.
\end{itemize}
\end{lem}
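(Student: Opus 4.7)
My strategy is to derive three parallel three-term recurrences for the characteristic polynomials of the tridiagonal matrices $L(P_n)$, $B_n$, and $U_n$ by cofactor expansion along the last row, chain them to obtain (a)--(c), and then attack (d) via the matrix-determinant lemma applied to the rank-one relation between $L(C_n)$ and $L(P_n)$. Write $p_n := \phi(L(P_n))$, $b_n := \phi(B_n)$, $u_n := \phi(U_n)$. Each of $xI - U_n$, $xI - B_n$, $xI - L(P_n)$ is symmetric tridiagonal with every off-diagonal entry equal to $1$; the only things distinguishing them are the first and last diagonal entries, which are either $x-2$ or $x-1$ depending on the matrix. A routine two-step cofactor expansion along the last row of each matrix then yields
\begin{equation*}
u_n = (x-2)u_{n-1} - u_{n-2}, \qquad b_n = (x-1)u_{n-1} - u_{n-2}, \qquad p_n = (x-1)b_{n-1} - b_{n-2},
\end{equation*}
under the conventions $u_{-1} = b_{-1} = 0$, $u_0 = b_0 = 1$, with the base cases $p_1 = x$, $u_1 = x-2$, $b_1 = x-1$ checked by inspection.

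Parts (a), (b) and (c) then follow by chaining these recurrences. For (c), substituting the $b$-recurrence at indices $n-1$ and $n-2$ into $p_n = (x-1)b_{n-1} - b_{n-2}$ produces $p_n = (x-1)^2 u_{n-2} - 2(x-1)u_{n-3} + u_{n-4}$, and a single application of $u_{n-2} = (x-2)u_{n-3} - u_{n-4}$ collapses this to $x(x-2)u_{n-2} - xu_{n-3} = xu_{n-1}$, proving $p_n = xu_{n-1}$. Part (a) is then immediate: $p_{n+1} = xu_n = x[(x-2)u_{n-1} - u_{n-2}] = (x-2)p_n - p_{n-1}$. For (b), multiplying the $b$-recurrence by $x$ and applying (c) twice gives $xb_n = (x-1)p_n - p_{n-1}$, which by (a) equals $p_{n+1} + p_n$.

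For (d) I would exploit the rank-one identity $L(C_n) = L(P_n) + vv^{T}$ with $v = \mathbf{e}_1 - \mathbf{e}_n$, which encodes the chord $\{v_1, v_n\}$ whose addition to $P_n$ produces $C_n$. The matrix-determinant lemma then gives
\begin{equation*}
\phi(L(C_n)) = p_n - v^{T}\,\mathrm{adj}(xI - L(P_n))\,v,
\end{equation*}
so I need the $(1,1)$-, $(n,n)$-, and $(1,n)$-cofactors of $xI - L(P_n)$. The first two both equal $b_{n-1}$ by the reflective symmetry of the path. For the $(1, n)$-cofactor, the key observation is that after deleting row $n$ and column $1$ from $xI - L(P_n)$ the resulting $(n-1)\times(n-1)$ matrix has $(i,j)$-entry equal to the $(i, j+1)$-entry of $xI - L(P_n)$, which vanishes for $j > i$ and equals $1$ when $j = i$; so this matrix is lower triangular with $1$s on the diagonal, its determinant is $1$, and the corresponding cofactor is $(-1)^{n+1}$. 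Plugging the three cofactors into the formula above yields $\phi(L(C_n)) = p_n - 2b_{n-1} + 2(-1)^{n+1}$, and the claimed identity follows by using (b) to replace $xb_{n-1}$ by $p_n + p_{n-1}$ and (a) to recognize $(x-2)p_n - 2p_{n-1}$ as $p_{n+1} - p_{n-1}$. The main obstacle is this sign and triangularity argument for the $(1, n)$-cofactor, since it is the source of the $2(-1)^{n+1}$ correction term in (d); the rest is bookkeeping of two-term recurrences.
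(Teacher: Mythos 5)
This lemma is quoted in the paper from \cite{kn:Guo08} and no proof is given there, so there is no internal argument to compare against; your task was effectively to supply a proof from scratch, and the one you give is correct. Parts (a)--(c) follow the standard route: all three matrices $xI-L(P_n)$, $xI-B_n$, $xI-U_n$ are symmetric tridiagonal with off-diagonal entries $1$, differing only in the end diagonal entries, and Laplace expansion along the last row yields the three two-term recurrences you state; the base cases $p_1=x$, $b_1=x-1$, $u_1=x-2$ and the chaining that collapses $p_n=(x-1)^2u_{n-2}-2(x-1)u_{n-3}+u_{n-4}$ to $xu_{n-1}$ all check out (you should, strictly speaking, verify (c) directly for $n=1,2$ rather than only the individual base polynomials, since the four-index substitution needs $n\ge 3$, but this is a triviality). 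Your treatment of (d) is the genuinely distinctive part: instead of expanding $\det(xI-L(C_n))$ directly along a row containing the two wrap-around entries (the usual computation, which produces the $2(-1)^{n+1}$ term as a pair of corner minors), you write $L(C_n)=L(P_n)+vv^{T}$ with $v=e_1-e_n$ and invoke the matrix-determinant lemma, reducing everything to the three cofactors $C_{11}=C_{nn}=\phi(B_{n-1})$ and $C_{1n}=(-1)^{n+1}$; your triangularity argument for the latter is correct, and the final algebra converting $p_n-2b_{n-1}+2(-1)^{n+1}$ into $\frac{1}{x}p_{n+1}-\frac{1}{x}p_{n-1}+2(-1)^{n+1}$ via (a) and (b) is exact (I verified it against $\phi(L(C_3))=x(x-3)^2$). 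The rank-one-update route buys a cleaner bookkeeping of signs and makes transparent where the $(-1)^{n+1}$ comes from; the direct expansion buys independence from the adjugate formula. Either is acceptable, and your proof is complete as written.
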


Combining these and $\phi(L(P_1);4)=4$, we obtain the following formulas.

\begin{prop}\label{eigenvalue4}
\noindent\emph{(a)}  $ \phi(L(P_n);4)=4n;$  ~~\emph{(b)} $\phi(B_n;4)=2n+1;$  ~~\emph{(c)} $\phi(U_{n};4)=n+1;$  ~~\emph{(d)} $\phi(L(C_n);4)=2+2(-1)^{n+1}.$
\end{prop}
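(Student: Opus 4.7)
The proposition is a direct computational consequence of Lemma \ref{Lpolynomial1} evaluated at $x=4$, so my plan is simply to substitute and chain the four identities together in the right order.

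First, for part (a), I would specialize the recurrence in Lemma \ref{Lpolynomial1}(a) at $x = 4$, which reduces to
\[
\phi(L(P_{n+1});4) = 2\,\phi(L(P_n);4) - \phi(L(P_{n-1});4).
\]
With the initial values $\phi(L(P_0);4) = 0$ and $\phi(L(P_1);4) = 4$ (the former from the stated convention, the latter given in the excerpt), a one-line induction on $n$ gives $\phi(L(P_n);4) = 4n$. This is the base computation on which the rest of the proof rests.

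Next, I would derive (c) immediately from (a) by plugging $x = 4$ into Lemma \ref{Lpolynomial1}(c): since $\phi(L(P_n);4) = 4\,\phi(U_{n-1};4)$ and the left side equals $4n$, we get $\phi(U_{n-1};4) = n$, i.e.\ $\phi(U_n;4) = n+1$. Then (b) follows by plugging $x=4$ into Lemma \ref{Lpolynomial1}(b) and using (a) on both terms of the right-hand side:
\[
4\,\phi(B_n;4) = \phi(L(P_{n+1});4) + \phi(L(P_n);4) = 4(n+1) + 4n,
\]
which gives $\phi(B_n;4) = 2n+1$.

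Finally, for (d), I would evaluate the identity in Lemma \ref{Lpolynomial1}(d) at $x = 4$ and use (a) again:
\[
\phi(L(C_n);4) = \tfrac{1}{4}\,\phi(L(P_{n+1});4) - \tfrac{1}{4}\,\phi(L(P_{n-1});4) + 2(-1)^{n+1} = (n+1) - (n-1) + 2(-1)^{n+1},
\]
which simplifies to $2 + 2(-1)^{n+1}$. There is no real obstacle here; the only mild subtlety is making sure the initial values for $\phi(L(P_n);4)$ are set up correctly so that the induction in part (a) starts cleanly. Once (a) is in hand, parts (b), (c), and (d) are essentially one-line substitutions.
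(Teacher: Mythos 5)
Your proposal is correct and matches the paper's intended argument exactly: the paper simply says the formulas follow by ``combining these [the identities of Lemma \ref{Lpolynomial1}] and $\phi(L(P_1);4)=4$,'' which is precisely the induction on the recurrence for part (a) followed by the one-line substitutions you give for (b)--(d). All four evaluations check out.
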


For a vertex $v$ of $G$, let $L_v(G)$ denote the principal sub-matrix of $L(G)$ formed by deleting the row and column corresponding to $v$.

\begin{lem}\label{Lpolynomial2}\emph{\cite{kn:Guo05}}
Let $G_1$ and $G_2$ be vertex-disjoint graphs. Let $G$ be the graph obtained by taking the union of $G_1$ and $G_2$ and then adding an edge between a vertex $u$ of $G_1$ and a vertex $v$ of $G_2$. Then
\[
\phi(L(G))=\phi(L(G_1))\phi(L(G_2))-\phi(L(G_1))\phi(L_v(G_2))-\phi(L(G_2))\phi(L_u(G_1)).
\]
\end{lem}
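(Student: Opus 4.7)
The plan is to realize $L(G)$ as a rank-one additive perturbation of $L(G_1\cup G_2)$ and invoke the matrix determinant lemma. Let $\delta := e_u - e_v \in \mathbb{R}^{n_1+n_2}$ denote the signed indicator vector of the new edge, where coordinates are indexed by $V(G_1)\cup V(G_2)$ and $n_i=|V(G_i)|$. Adding the edge $uv$ increases the diagonal entries of the Laplacian at positions $u$ and $v$ by one and inserts $-1$ at the off-diagonal positions $(u,v)$ and $(v,u)$, so an entrywise check gives
\[
L(G) = L(G_1\cup G_2) + \delta\delta^T.
\]
Setting $M := xI - L(G_1\cup G_2)$ and applying the identity $\det(A+pq^T) = \det(A) + q^T\,\mathrm{adj}(A)\,p$ with $p=-\delta$ and $q=\delta$ yields
\[
\phi(L(G);x) = \det(M - \delta\delta^T) = \det(M) - \delta^T\,\mathrm{adj}(M)\,\delta.
\]

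The block-diagonal structure of $M$ with blocks $xI_{n_1}-L(G_1)$ and $xI_{n_2}-L(G_2)$ immediately gives $\det(M)=\phi(L(G_1))\phi(L(G_2))$. Expanding the quadratic form produces
\[
\delta^T\,\mathrm{adj}(M)\,\delta = \mathrm{adj}(M)_{uu} - \mathrm{adj}(M)_{uv} - \mathrm{adj}(M)_{vu} + \mathrm{adj}(M)_{vv}.
\]
The diagonal terms can be read off from the block structure and the definition of $L_u, L_v$: the entry $\mathrm{adj}(M)_{uu}$ equals the determinant of $M$ with row and column $u$ removed, which factorizes as $\phi(L_u(G_1))\phi(L(G_2))$, and similarly $\mathrm{adj}(M)_{vv} = \phi(L(G_1))\phi(L_v(G_2))$.

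The main obstacle is to show that the cross terms $\mathrm{adj}(M)_{uv}$ and $\mathrm{adj}(M)_{vu}$ both vanish. Up to a sign, $\mathrm{adj}(M)_{uv}$ is the determinant of the $(n_1+n_2-1)\times(n_1+n_2-1)$ matrix obtained from $M$ by deleting row $v$ and column $u$. In this submatrix, the $n_1$ rows indexed by $V(G_1)$ retain nonzero entries only in the $n_1-1$ surviving columns indexed by $V(G_1)\setminus\{u\}$, because $M$ is block-diagonal. A pigeonhole argument on the Leibniz expansion then forces every permutation to map at least one $G_1$-row to a $V(G_2)$-column, contributing a factor of zero; hence the determinant is $0$. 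The symmetric argument handles $\mathrm{adj}(M)_{vu}$. Substituting $\mathrm{adj}(M)_{uv}=\mathrm{adj}(M)_{vu}=0$ back into the expansion delivers exactly
\[
\phi(L(G)) = \phi(L(G_1))\phi(L(G_2)) - \phi(L_u(G_1))\phi(L(G_2)) - \phi(L(G_1))\phi(L_v(G_2)),
\]
which, upon relabeling the two subtracted products, is the stated identity. Everything beyond the vanishing of the cross terms is routine block-matrix bookkeeping.
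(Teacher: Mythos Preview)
Your proof is correct. The rank-one update $L(G)=L(G_1\cup G_2)+\delta\delta^T$ with $\delta=e_u-e_v$ is the right way to encode the new edge, and the adjugate form of the matrix determinant lemma is valid without any invertibility assumption, so the identity holds for all $x$, not just away from the spectrum. Your argument that the off-diagonal adjugate entries vanish is fine; a slightly quicker phrasing is that after deleting row $v$ and column $u$ the submatrix is block-triangular with a nonsquare diagonal block, hence singular.

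As for comparison with the paper: there is nothing to compare. The paper quotes this lemma from \cite{kn:Guo05} and gives no proof of its own, so your argument is a genuine addition rather than a paraphrase. The original proof in Guo's paper proceeds by a direct cofactor expansion of $\det(xI-L(G))$ along the row and then the column corresponding to one endpoint of the bridge; your matrix-determinant-lemma route is cleaner and makes the structure (rank-one perturbation of a block-diagonal matrix) completely transparent.
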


\begin{lem}\label{degrees}\emph{\cite{kn:Cvetkovic07,kn:Simic07}}
Let $G$ be a graph with $n$ vertices, $m$ edges and $n_3(G)$
triangles. Let $T_k=\sum_{i=1}^{n}\nu_i^k$ be the $k$th $Q$-spectral moment of $G$,
$k=0,1,2,\ldots$. Then
\begin{eqnarray*}
   & &T_0=n,~~T_1=\sum_{i=1}^{n}d_i=2m,~~T_2=2m+\sum_{i=1}^{n}d_i^2,~~T_3=6n_3(G)+3\sum_{i=1}^{n}d_i^2+\sum_{i=1}^{n}d_i^3.
\end{eqnarray*}
\end{lem}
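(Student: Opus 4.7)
The plan is to compute each $T_k$ directly from the definition $T_k=\mathrm{tr}(Q(G)^k)$, using the decomposition $Q(G)=D(G)+A(G)$ and the fact that $A(G)$ has zero diagonal while $D(G)$ is diagonal with entries $d_i$. The values $T_0=n$ and $T_1=\mathrm{tr}(D)+\mathrm{tr}(A)=\sum d_i=2m$ are immediate, since $A_{ii}=0$ for all $i$.

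For $T_2$, I would expand $Q^2=D^2+DA+AD+A^2$ and take the trace term-by-term. The trace of $D^2$ is $\sum d_i^2$; the cross terms $\mathrm{tr}(DA)$ and $\mathrm{tr}(AD)$ both equal $\sum_i d_i A_{ii}=0$; and $\mathrm{tr}(A^2)=\sum_{i,j}A_{ij}^2=2m$, since $A_{ij}\in\{0,1\}$ and each edge is counted twice. Adding these gives $T_2=2m+\sum d_i^2$.

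For $T_3$, I would expand $(D+A)^3$ and group the eight resulting words by the number of $A$-factors, using the cyclic property of the trace to identify equivalent terms. The pure-$D$ word contributes $\mathrm{tr}(D^3)=\sum d_i^3$, and the pure-$A$ word contributes $\mathrm{tr}(A^3)=6n_3(G)$, since each triangle gives six closed walks of length $3$ based at its vertices. The three words with a single $A$-factor, namely $D^2A$, $DAD$, $AD^2$, all have trace zero: in each case the $(i,i)$-entry reduces to a diagonal entry of $A$, which vanishes. The three words with two $A$-factors are $DA^2$, $ADA$, $A^2D$; using $(A^2)_{ii}=d_i$ one checks that $\mathrm{tr}(DA^2)=\mathrm{tr}(A^2D)=\sum d_i^2$, and a short computation $\mathrm{tr}(ADA)=\sum_{i,j}A_{ij}d_jA_{ji}=\sum_j d_j\cdot d_j=\sum d_j^2$ treats the third. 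Summing yields $T_3=\sum d_i^3+3\sum d_i^2+6n_3(G)$.

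There is no real obstacle here: the argument is pure bookkeeping with the trace operation. The only place to be careful is correctly grouping the eight noncommutative monomials that appear in $(D+A)^3$ and, when computing $\mathrm{tr}(ADA)$, noting that the middle $D$ weights the sum by the degree of the intermediate vertex rather than of the root, so that after summation one still obtains $\sum d_i^2$ by a relabeling. The identity $\mathrm{tr}(A^3)=6n_3(G)$ is standard and can simply be quoted.
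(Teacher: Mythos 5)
Your proof is correct: every trace computation checks out, including $\mathrm{tr}(ADA)=\sum_j d_j^2$ and $\mathrm{tr}(A^3)=6n_3(G)$. The paper quotes this lemma from the references without proof, and the cited sources establish it by essentially the same spectral-moment argument (there phrased as counting closed semi-edge walks, which is just the combinatorial reading of your expansion of $\mathrm{tr}\bigl((D+A)^k\bigr)$), so your direct bookkeeping is exactly the standard route.
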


From Lemma \ref{degrees}, we can easily get the following result.

\begin{lem}\label{Qtriangles}
Let $G$ and $H$ be $Q$-cospectral graphs. Then
\begin{itemize}
\item[\rm (a)] $G$ and $H$ have the same number of vertices;
\item[\rm (b)] $G$ and $H$ have the same number of edges;
\item[\rm (c)] $\sum\limits_{v \in V(G)}d_G(v)^2=\sum\limits_{v \in V(H)}d_H(v)^2$;
\item[\rm (d)] $6n_3(G)+\sum\limits_{v \in V(G)}d_G(v)^3=6n_3(H)+\sum\limits_{v \in V(H)}d_H(v)^3$.
\end{itemize}
\end{lem}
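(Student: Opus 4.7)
The plan is to read off each of the four assertions directly from the formulas for the first four $Q$-spectral moments provided by Lemma~\ref{degrees}. Since $Q$-cospectrality means that the multisets $\{\nu_1,\dots,\nu_n\}$ for $G$ and $H$ coincide, every power sum $T_k=\sum_i \nu_i^k$ agrees for $G$ and $H$. So the whole argument is to feed this invariance into the explicit expressions for $T_0,T_1,T_2,T_3$ and unpack the consequences one by one.

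First, $T_0=n$ gives that $G$ and $H$ have the same number of vertices, establishing (a). Next, $T_1=2m$ immediately yields (b). For (c) I would use $T_2=2m+\sum_i d_i^2$: since $T_2$ and $m$ are already known to agree on $G$ and $H$ by the previous step, the sums of squared degrees must agree. For (d) the same bootstrapping works on $T_3=6n_3(G)+3\sum_i d_i^2+\sum_i d_i^3$: the term $3\sum_i d_i^2$ has already been matched across $G$ and $H$ via (c), so equality of $T_3$ forces equality of $6n_3(\cdot)+\sum_i d_i^3$.

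There is no real obstacle here; the statement is essentially a corollary of Lemma~\ref{degrees}, and the only point to be slightly careful about is the order in which the four items are derived, since (c) uses (b) and (d) uses (c). I would therefore present the proof as a short sequential chain, writing one line per item and citing Lemma~\ref{degrees} for the underlying identities.
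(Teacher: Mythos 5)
Your proof is correct and matches the paper's intent exactly: the paper simply remarks that the lemma follows "easily" from Lemma~\ref{degrees}, and your sequential unpacking of $T_0,T_1,T_2,T_3$ (using (b) to isolate $\sum d_i^2$ in $T_2$, then (c) to isolate $6n_3+\sum d_i^3$ in $T_3$) is precisely the intended argument.
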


Let $\mathcal {L}(G)$ denote the line graph of a graph $G$. Let
$\mathcal {S}(G)$ be the \emph{subdivision graph} of $G$ obtained by
replacing each edge of $G$ by a path of length two. The
$Q$-spectrum of a graph can be exactly expressed by the $A$-spectrum
of its line and subdivision graphs
\cite{kn:Cvetkovic07,kn:Cvetkovic08,kn:Cvetkovic09}, and the
following results can be found in
\cite{kn:Cvetkovic07,kn:Cvetkovic08,kn:Wang10}.

\begin{lem}\label{AQAcospectral} If two graphs $G$ and $H$ are
$Q$-cospectral, then $\mathcal {L}(G)$ and
$\mathcal {L}(H)$ are $A$-cospectral.
\end{lem}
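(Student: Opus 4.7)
The plan is to prove this via the standard factorization of the signless Laplacian through the vertex-edge incidence matrix. Let $B = B(G)$ denote the $n \times m$ vertex-edge incidence matrix of a graph $G$ with $n$ vertices and $m$ edges (with $B_{v,e} = 1$ iff $v$ is incident to $e$, and $0$ otherwise). The two key identities that drive the proof are
\[
BB^{T} = D(G) + A(G) = Q(G), \qquad B^{T}B = 2I_{m} + A(\mathcal{L}(G)).
\]
So $A(\mathcal{L}(G))$ is, up to the shift $-2 I_m$, a Gram matrix made from $B(G)$, while $Q(G)$ is the other Gram matrix made from the same $B(G)$.

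First, I would invoke Lemma \ref{Qtriangles}(a)(b) to note that $Q$-cospectral graphs $G$ and $H$ share the same number $n$ of vertices and the same number $m$ of edges; in particular, $\mathcal{L}(G)$ and $\mathcal{L}(H)$ are both graphs on $m$ vertices. Next I would appeal to the standard linear-algebra fact that, for any real $n \times m$ matrix $B$, the two products $BB^{T}$ and $B^{T}B$ have exactly the same multiset of \emph{non-zero} eigenvalues. Applied to $B = B(G)$, this shows that the non-zero eigenvalues of $Q(G)$ coincide (with multiplicity) with the non-zero eigenvalues of $2I_m + A(\mathcal{L}(G))$; subtracting $2$, this translates into a description of the $A$-spectrum of $\mathcal{L}(G)$ in terms of the $Q$-spectrum of $G$, with any remaining eigenvalues being equal to $-2$.

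To complete the proof, I need to control the multiplicity of $-2$ as an eigenvalue of $A(\mathcal{L}(G))$. If $r = \mathrm{rank}(B(G)) = \mathrm{rank}(Q(G))$, then zero is an eigenvalue of $B^{T}B$ with multiplicity $m - r$, so $-2$ appears as an eigenvalue of $A(\mathcal{L}(G))$ with multiplicity (at least) $m - r$, and all other eigenvalues are accounted for by the non-zero eigenvalues of $Q(G)$ shifted by $-2$. Since $G$ and $H$ are $Q$-cospectral, their signless Laplacians have the same multiplicity of $0$, hence the same rank $r$; combined with $m_G = m_H$, this gives $m_G - r_G = m_H - r_H$, so the multiplicities of $-2$ agree as well, and the full $A$-spectra of $\mathcal{L}(G)$ and $\mathcal{L}(H)$ coincide.

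There is no real obstacle here; the proof is essentially a textbook identity plus a rank count. The only subtlety worth flagging is that the equal multiplicity of $-2$ in $A(\mathcal{L}(G))$ and $A(\mathcal{L}(H))$ must be derived from matching ranks of $Q(G)$ and $Q(H)$ (which follows from $Q$-cospectrality) together with $m_G = m_H$, rather than read off directly, since $-2$ is contributed by the kernel of the incidence matrix and is not visible in the non-zero part of the $Q$-spectrum.
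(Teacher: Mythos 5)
Your proof is correct and is essentially the standard argument behind this lemma (which the paper itself only cites, from Cvetkovi\'{c} et al., rather than proving): the identities $BB^{T}=Q(G)$ and $B^{T}B=2I_m+A(\mathcal{L}(G))$ give the polynomial relation $\phi(A(\mathcal{L}(G));\lambda)=(\lambda+2)^{m-n}\phi(Q(G);\lambda+2)$, so the $A$-spectrum of the line graph is determined by $n$, $m$ and the $Q$-spectrum, all of which are shared by $Q$-cospectral graphs. Your care with the multiplicity of $-2$ via the rank of the incidence matrix is exactly the right bookkeeping; nothing is missing.
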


\begin{lem}\label{QA}
Two graphs $G$ and $H$ are $Q$-cospectral if and
only if $\mathcal {S}(G)$ and $\mathcal {S}(H)$ are $A$-cospectral.
\end{lem}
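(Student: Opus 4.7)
The plan is to reduce the statement to a polynomial identity between $\phi(A(\mathcal{S}(G)); x)$ and $\phi(Q(G); x^2)$. Once this identity is in hand, both implications of the ``if and only if'' follow by elementary bookkeeping, using Lemma \ref{Qtriangles} on the one side and the standard fact that the number of vertices and the number of edges of a graph are determined by its $A$-spectrum (via the $0$th and $2$nd spectral moments) on the other.

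First I would introduce the $n\times m$ unsigned vertex--edge incidence matrix $B$ of $G$, whose $(v,e)$-entry is $1$ if $v$ is an endpoint of $e$ and $0$ otherwise. A direct calculation gives $BB^{T}=D(G)+A(G)=Q(G)$. By the very definition of $\mathcal{S}(G)$, partitioning $V(\mathcal{S}(G))$ into the $n$ original vertices of $G$ and the $m$ subdivision vertices (one per edge of $G$) produces the block form
$$
A(\mathcal{S}(G))=\begin{pmatrix} 0 & B \\ B^{T} & 0 \end{pmatrix}.
$$
Applying the Schur complement formula to $\det(xI_{n+m}-A(\mathcal{S}(G)))$ for $x\neq 0$, and observing that both sides are polynomials in $x$ (so the identity extends to all $x$), yields
$$
\phi(A(\mathcal{S}(G));x)=x^{m-n}\,\phi(Q(G);x^2).
$$

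For the forward direction, if $G$ and $H$ are $Q$-cospectral then Lemma \ref{Qtriangles} gives $n_G=n_H$ and $m_G=m_H$, so the identity immediately produces $\phi(A(\mathcal{S}(G));x)=\phi(A(\mathcal{S}(H));x)$. For the converse, assume $\mathcal{S}(G)$ and $\mathcal{S}(H)$ are $A$-cospectral; then $|V(\mathcal{S}(G))|=|V(\mathcal{S}(H))|$ and $|E(\mathcal{S}(G))|=|E(\mathcal{S}(H))|$, i.e.\ $n_G+m_G=n_H+m_H$ and $2m_G=2m_H$, which forces $n_G=n_H$ and $m_G=m_H$. Cancelling the common factor $x^{m-n}$ in the identity then gives $\phi(Q(G);x^2)=\phi(Q(H);x^2)$, whence $\phi(Q(G);x)=\phi(Q(H);x)$.

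I do not expect any substantive obstacle. The only mild care needed is in the Schur-complement step, where one should note that the resulting identity, though derived under the hypothesis $x\neq 0$, is an identity of polynomials in $x$ and therefore holds identically; and in the converse direction, one must invoke the fact that $A$-cospectrality of $\mathcal{S}(G)$ and $\mathcal{S}(H)$ alone already determines $n_G$ and $m_G$ before the polynomial identity can be used to conclude $Q$-cospectrality.
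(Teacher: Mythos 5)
Your proposal is correct. Note that the paper does not actually prove this lemma: it is quoted as a known result, with the surrounding text attributing it to the cited references of Cvetkovi\'{c} et al.\ and Wang et al. Your argument --- writing $A(\mathcal{S}(G))$ in block form via the unsigned incidence matrix $B$, using $BB^{T}=Q(G)$ and a Schur complement to obtain $\phi(A(\mathcal{S}(G));x)=x^{m-n}\phi(Q(G);x^{2})$, and then recovering $n$ and $m$ from the $0$th and $2$nd $A$-spectral moments of the subdivision graphs in the converse direction --- is precisely the standard proof given in those references, and every step checks out (including the two points of care you flag: the extension of the Schur-complement identity from $x\neq 0$ to a polynomial identity, and the need to pin down $n$ and $m$ before cancelling $x^{m-n}$). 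So you have supplied a complete, self-contained proof of a statement the paper merely cites.
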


\begin{lem}\label{closedfourwalk}\emph{\cite{kn:Cvetkovic87}}
Let $G$ be a graph with $n$ vertices and $m$ edges. Let $n_4(G)$
be the number of subgraphs of $G$ isomorphic to $C_4$, and $x_k$ the number of vertices of degree $k$ in $G$. Then
\begin{eqnarray*}
\sum\limits_i\lambda_i^4&=&8n_4(G)+\sum\limits_kkx_k+4\sum_{k\geq2}\frac{k(k-1)}{2}x_k.
\end{eqnarray*}
\end{lem}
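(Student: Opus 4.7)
The plan is to exploit the classical identity $\sum_i \lambda_i^4 = \operatorname{tr}(A(G)^4)$, whose right-hand side counts closed walks of length $4$ in $G$. I would then classify these walks by the coincidence pattern of their vertices and sum the contributions to match the stated formula.

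Writing a closed walk of length $4$ as $v_0 v_1 v_2 v_3 v_0$, I would partition such walks into three types: Type~I, with $v_2 = v_0$; Type~II, with $v_2 \neq v_0$ but $v_1 = v_3$; and Type~III, with the four vertices pairwise distinct. A short case analysis on whether $v_0 = v_2$ and whether $v_1 = v_3$ shows this is a genuine partition. Type~I walks correspond to a choice of $v_0$ together with arbitrary $v_1, v_3 \in N(v_0)$, contributing $\sum_v d_G(v)^2 = \sum_k k^2 x_k$. Type~II walks have the form $v \to u \to w \to u \to v$ with $u \in N(v)$ and $w \in N(u)\setminus\{v\}$; swapping the order of summation yields $\sum_u d_G(u)(d_G(u)-1) = \sum_k k(k-1) x_k$. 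Type~III walks trace a $4$-cycle of $G$, and each copy of $C_4$ is traced by exactly $4 \cdot 2 = 8$ closed walks (four starting vertices, two directions), contributing $8\,n_4(G)$ in total.

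Summing the three counts gives
\[
\sum_i \lambda_i^4 \;=\; 8\,n_4(G) \;+\; \sum_k k^2 x_k \;+\; \sum_k k(k-1)\, x_k.
\]
To finish, I would use the elementary identity $k^2 + k(k-1) = k + 4 \cdot \frac{k(k-1)}{2}$ and observe that $\frac{k(k-1)}{2}$ vanishes for $k \leq 1$, which rewrites the right-hand side in the advertised form. The only step that needs any real care is verifying that Types~I--III exhaust and do not overlap (in particular, ruling out pairs among $\{v_0,v_1,v_2,v_3\}$ coinciding in ways incompatible with $G$ being simple) and that Type~III walks are in bijection with the $8$ ordered traversals of each $C_4$ subgraph; the rest is bookkeeping.
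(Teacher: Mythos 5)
Your proof is correct. The paper gives no argument for this lemma --- it is quoted directly from Cvetkovi\'{c} and Rowlinson \cite{kn:Cvetkovic87} --- but your derivation via $\sum_i\lambda_i^4=\operatorname{tr}(A(G)^4)$ and the three-way classification of closed $4$-walks is the standard proof of that result, and the final bookkeeping checks out: $k^2+k(k-1)=2k^2-k=k+4\cdot\frac{k(k-1)}{2}$, so your expression coincides with the stated one.
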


A spanning subgraph of $G$ whose components are trees or odd-unicyclic graphs is called a \emph{$TU$-subgraph} of $G$ \cite{kn:Cvetkovic07}. Suppose that a $TU$-subgraph $G^{TU}$ of $G$ contain $c$ unicyclic graphs and trees $T_1,T_2,\ldots,T_s$. The weight $W(G^{TU})$ of  $G^{TU}$  is defined by
\[W(G^{TU})=4^c\prod_{i=1}^s(1+|E(T_i)|).\]
Then the coefficients of $\phi(Q(G))$ can be expressed in terms of the weights of $TU$-subgraphs of $G$ as follows.

\begin{lem}\label{QTUgraph}\emph{\cite{kn:Cvetkovic07}}
Let $\phi(Q(G))=q_0x^n+q_1x^{n-1}+\dots+q_n$. Then $q_0=1$ and
\[q_j=\sum_{G^{TU}_j}(-1)^jW(G^{TU}_j),\quad j=1,2,\ldots,n,\]
where the summation runs over all $TU$-subgraphs $G^{TU}_j$ of $G$ with $j$ edges.
\end{lem}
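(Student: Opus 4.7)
The plan is to interpret $Q(G)$ as a Gram matrix and then expand its principal minors combinatorially. Let $R$ be the (unsigned) vertex--edge incidence matrix of $G$; that is, the $n\times m$ $0/1$-matrix with $R_{ve}=1$ iff $v$ is an endpoint of $e$. A direct verification gives $Q(G)=RR^{T}$. Combined with the standard expansion of the characteristic polynomial in terms of principal minors, namely $\phi(Q(G))=\sum_{j=0}^{n}(-1)^{j}\sigma_{j}\,x^{n-j}$ with $\sigma_{j}=\sum_{|S|=j}\det(Q(G)[S])$, this gives $q_{j}=(-1)^{j}\sigma_{j}$. Let $R[S,F]$ denote the submatrix of $R$ with rows indexed by $S\subseteq V(G)$ and columns indexed by $F\subseteq E(G)$. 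Applying the Cauchy--Binet formula to $Q(G)[S]$ yields
\[
\sigma_{j}\;=\;\sum_{|S|=j}\,\sum_{|F|=j}\det(R[S,F])^{2},
\]
so the task reduces to evaluating the squared incidence minors $\det(R[S,F])^{2}$.

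The heart of the proof is a combinatorial analysis of these minors in terms of the structure of the subgraph $H=(V(F),F)$. Since any row indexed by a vertex $v\in S\setminus V(F)$ is zero, we may assume $S\subseteq V(F)$. The matrix $R[S,F]$ is block-diagonal with respect to the connected components $H_{1},H_{2},\ldots$ of $H$, giving $\det(R[S,F])=\prod_{i}\det(R[S\cap V(H_{i}),E(H_{i})])$; in particular, a nonzero product forces each block to be square, i.e.\ $|S\cap V(H_{i})|=|E(H_{i})|$. One then checks three cases: if $H_{i}$ is a tree with $k$ vertices and $k-1$ edges, the equality forces $S$ to omit exactly one vertex $r\in V(H_{i})$, and ordering edges by a BFS from $r$ gives a unit upper-triangular submatrix with determinant $\pm 1$; if $H_{i}$ is unicyclic with $k$ vertices and $k$ edges, then $S\cap V(H_{i})=V(H_{i})$, and a row reduction that trims the pendant subtrees and then expands the incidence matrix of the remaining cycle shows that the determinant equals $\pm 2$ when the cycle is odd and $0$ when it is even; if $H_{i}$ has more than one cycle, then $|E(H_{i})|>|V(H_{i})|\ge|S\cap V(H_{i})|$ and no square block is possible, forcing a zero contribution.

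Assembling these pieces, the nonzero summands arise exactly from the pairs $(S,F)$ for which $(V(G),F)$ is a spanning $TU$-subgraph of $G$, with $S$ containing every vertex of each odd-unicyclic component and all but one chosen vertex of each non-trivial tree component. Each such pair contributes $2^{2c}=4^{c}$ to $\sigma_{j}$, where $c$ is the number of odd-unicyclic components of $F$. Fixing a spanning $TU$-subgraph $G^{TU}_{j}$ with tree components $T_{1},\ldots,T_{s}$ and summing over the $\prod_{i}(1+|E(T_{i})|)$ possible choices of the omitted vertex (observing that an isolated-vertex component $K_{1}$ contributes the trivial factor $1=1+|E(K_{1})|$), we find
\[
\sigma_{j}\;=\;\sum_{G^{TU}_{j}}4^{c}\prod_{i=1}^{s}(1+|E(T_{i})|)\;=\;\sum_{G^{TU}_{j}}W(G^{TU}_{j}),
\]
and hence $q_{j}=(-1)^{j}\sum_{G^{TU}_{j}}W(G^{TU}_{j})$.

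The main technical obstacle is the component-wise determinant evaluation, especially the dichotomy for unicyclic blocks that produces the factor $4$ on odd cycles while killing even ones; this is precisely the phenomenon that distinguishes the signless Laplacian expansion from its signed counterpart and explains why even cycles are absent from the $TU$-subgraph catalogue. Once that computation is in place, the remainder of the argument is a bookkeeping step that converts the double sum over $(S,F)$ into a weighted sum over spanning $TU$-subgraphs of $G$ with $j$ edges.
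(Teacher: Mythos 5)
Your proof is correct: the factorization $Q(G)=RR^{T}$, the principal-minor expansion with Cauchy--Binet, and the case analysis of unsigned incidence minors (giving $\pm1$ for trees, $\pm2$ for odd-unicyclic components, $0$ otherwise) assemble exactly into the weighted sum over $TU$-subgraphs, including the correct handling of isolated vertices. The paper itself gives no proof of this lemma --- it is quoted from the cited reference \cite{kn:Cvetkovic07} --- and your argument is essentially the standard one given there, so there is nothing to add.
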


\section{Proof of Theorem \ref{thm1}}
\label{sec:L}

Throughout this section we use $G$ to denote a propeller graph with $n=p+q+k-1$ vertices as shown in Fig. \ref{f1}. To prove Theorem \ref{thm1}, we first compute the Laplacian characteristic polynomial of $G$. Before proceeding, we need the following results.

\begin{prop}\label{LLLLLpolynomialL}
Let $G_1$ and $G_2$ be vertex-disjoint graphs. Let $G_1\cdot G_2$ be the coalescence obtained from $G_1$ and $G_2$ by identifying a vertex $u$ of $G_1$ with a vertex $v$ of $G_2$. Then
\begin{eqnarray*}
  \phi(L(G_1\cdot G_2);x)&=&\phi(L(G_1))\phi(L_v(G_2))+\phi(L_u(G_1))\phi(G_2)-x\phi(L_u(G_1))\phi(L_v(G_2)).
\end{eqnarray*}
\end{prop}

\begin{proof}
The coalescence $G_1\cdot G_2$ has Laplacian matrix $$\pmat{
                                              L_u(G_1)    & \mathrm{\mathbf{u}} &  O \\
                                              \mathrm{\mathbf{u}}^T &  d_{G_1}(u)+d_{G_2}(v)     &  \mathrm{\mathbf{v}} \\
                                                O^T         &  \mathrm{\mathbf{v}}^T            &  L_v(G_2)
                                                },$$
where
$\pmat{
L_u(G_1)    & \mathrm{\mathbf{u}}   \\
\mathrm{\mathbf{u}}^T &  d_{G_1}(u)
}$
and
$\pmat{
d_{G_2}(v)     &  \mathrm{\mathbf{v}} \\
\mathrm{\mathbf{v}}^T  & L_v(G_2)
}$ are the Laplacian matrices of $G_1$ and $G_2$ respectively, and $O$ is the zero matrix of appropriate size. Then
\[
\begin{aligned}
\phi(L(G_1\cdot G_2);x)
&=\dmat{
                            xI-L_u(G_1)            & -\mathrm{\mathbf{u}}           &  O \\
                            -\mathrm{\mathbf{u}}^T &  x-d_{G_1}(u)-d_{G_2}(v)     &  -\mathrm{\mathbf{v}} \\
                            O^T                    &  -\mathrm{\mathbf{v}}^T        &  xI-L_v(G_2)}\\
&=\dmat{
                            xI-L_u(G_1)            & -\mathrm{\mathbf{u}}           &  O \\
                            -\mathrm{\mathbf{u}}^T &  x-d_{G_1}(u)                  &  -\mathrm{\mathbf{v}} \\
                            O^T                    & \mathrm{\mathbf{0}}            &  xI-L_v(G_2)}\\
&\quad+\dmat{
                            xI-L_u(G_1)            & \mathrm{\mathbf{0}}            &  O \\
                            -\mathrm{\mathbf{u}}^T &  x-d_{G_2}(v)                  &  -\mathrm{\mathbf{v}} \\
                            O^T                    & -\mathrm{\mathbf{v}}^T         &  xI-L_v(G_2)}\\
&\quad+\dmat{
                            xI-L_u(G_1)            & \mathrm{\mathbf{0}}            &  O \\
                            -\mathrm{\mathbf{u}}^T & -x                             &  -\mathrm{\mathbf{v}} \\
                            O^T                    & \mathrm{\mathbf{0}}            &  xI-L_v(G_2)},
\end{aligned}
\]and the result follows.
\qed\end{proof}

\begin{prop}
Let $G_{p,q}$ be an $\infty$-graph consisting of cycles $C_p$ and $C_q$ with a common vertex $u$. Then
\begin{eqnarray}
  \phi(L(G_{p,q});x)&=&(x-4)\phi(U_{p-1})\phi(U_{q-1})-2\phi(U_{q-1})\left(\phi(U_{p-2})+(-1)^{p}\right)-2\phi(U_{p-1})\left(\phi(U_{q-2})+(-1)^{q})\right),\nonumber\\
                    &&\label{Propellerpolynomial}\\
   \phi(L(G_{p,q});4)&=&2(p+q)-4pq-2\left((-1)^q p+(-1)^p q\right).\label{Propellerpolynomial2}
\end{eqnarray}
\end{prop}
\begin{proof}
Lemma \ref{Lpolynomial1} implies that
\begin{eqnarray}
\phi(L(C_n))&=&\frac{1}{x}\phi(L(P_{n+1}))-\frac{1}{x}\phi(L(P_{n-1}))+2(-1)^{n+1}\nonumber\\
            &=&\frac{1}{x}\big((x-2)\phi(L(P_n))-\phi(L(P_{n-1}))\big)-\frac{1}{x}\phi(L(P_{n-1}))+2(-1)^{n+1}\nonumber\\
            &=&\frac{x-2}{x}\phi(L(P_n))-\frac{2}{x}\phi(L(P_{n-1}))+2(-1)^{n+1}\nonumber\\
            &=&(x-2)\phi(U_{n-1})-2\phi(U_{n-2})+2(-1)^{n+1}.\label{Neweqa121}
\end{eqnarray}
Note that $G_{p,q}$ is a coalescence of $C_p$ and $C_q$. Thus we obtain (\ref{Propellerpolynomial}) by using (\ref{Neweqa121}), $\phi(L_u(C_{q})) = \phi(U_{q-1})$, $\phi(L_u(C_{p})) = \phi(U_{p-1})$ and Proposition \ref{LLLLLpolynomialL}. (\ref{Propellerpolynomial2}) is an immediate consequence of (\ref{Propellerpolynomial}) and Proposition \ref{eigenvalue4}.
\qed\end{proof}

\begin{prop}\label{propeller4}
Let $G$ be a propeller graph with $n=p+q+k-1$ vertices as shown in Fig. \ref{f1}. Then
\begin{eqnarray}
\phi(L(G);x) &=& \phi(L(G_{p,q}))\phi(L(P_k))-\phi(L(G_{p,q}))\phi(B_{k-1})-\phi(L(P_k))\phi(U_{p-1})\phi(U_{q-1}),\label{propellerExpanding}\\
\phi((L(G);4)&=&2(2k+1)\left(p+q-(-1)^q p-(-1)^p q\right)-4pq(3k+1).\nonumber
\end{eqnarray}
\end{prop}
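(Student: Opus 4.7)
The plan is to read (\ref{propellerExpanding}) as a direct application of Lemma \ref{Lpolynomial2} and then substitute $x=4$, invoking Proposition \ref{eigenvalue4} and Proposition \ref{Propellerpolynomial2}.

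For the first identity, observe that the propeller graph $G$ is obtained by joining the $\infty$-graph $G_{p,q}$ to the path $P_k$ by a single edge between the degree-$4$ vertex $u$ of $G_{p,q}$ and an end vertex $v$ of $P_k$. Applying Lemma \ref{Lpolynomial2} with $G_1=G_{p,q}$ and $G_2=P_k$ gives
\[
\phi(L(G))=\phi(L(G_{p,q}))\phi(L(P_k))-\phi(L(G_{p,q}))\phi(L_v(P_k))-\phi(L(P_k))\phi(L_u(G_{p,q})).
\]
By the definition of $B_n$, deleting the row and column of the end vertex $v$ from $L(P_k)$ yields exactly $B_{k-1}$, so $\phi(L_v(P_k))=\phi(B_{k-1})$.

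The only genuinely geometric step is to identify $\phi(L_u(G_{p,q}))$. Deleting $u$ from $G_{p,q}$ disconnects the graph into two vertex-disjoint paths, on $p-1$ and $q-1$ vertices respectively, and since every vertex of $G_{p,q}$ other than $u$ has degree $2$, the principal submatrix $L_u(G_{p,q})$ is block-diagonal with blocks whose diagonal entries are all $2$ and whose off-diagonal pattern is that of a path. This matches exactly the definition of $U_{p-1}$ and $U_{q-1}$ (which come from $L(P_{p+1})$ and $L(P_{q+1})$ by deleting both end rows and columns, thereby leaving interior vertices of degree $2$). Hence $L_u(G_{p,q})=U_{p-1}\oplus U_{q-1}$ and $\phi(L_u(G_{p,q}))=\phi(U_{p-1})\phi(U_{q-1})$, completing (\ref{propellerExpanding}).

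To finish, substitute $x=4$ in (\ref{propellerExpanding}) and plug in the evaluations $\phi(L(P_k);4)=4k$, $\phi(B_{k-1};4)=2k-1$, $\phi(U_{p-1};4)=p$ and $\phi(U_{q-1};4)=q$ from Proposition \ref{eigenvalue4}, together with $\phi(L(G_{p,q});4)=2(p+q)-4pq-2\{(-1)^q p+(-1)^p q\}$ from Proposition \ref{Propellerpolynomial2}. Combining the first two terms yields the factor $4k-(2k-1)=2k+1$ multiplying $\phi(L(G_{p,q});4)$, while the last term contributes $-4kpq$. Collecting the $pq$-terms gives $-4pq(2k+1)-4kpq=-4pq(3k+1)$ and the remaining terms assemble into $2(2k+1)\{p+q-(-1)^q p-(-1)^p q\}$, which is the claimed formula. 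No essential obstacle arises beyond the correct identification of $L_u(G_{p,q})$ with $U_{p-1}\oplus U_{q-1}$; the remainder is bookkeeping.
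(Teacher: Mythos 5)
Your proposal is correct and follows exactly the paper's route: apply Lemma \ref{Lpolynomial2} to the decomposition $G_1=G_{p,q}$, $G_2=P_k$, identify $\phi(L_v(P_k))=\phi(B_{k-1})$ and $\phi(L_u(G_{p,q}))=\phi(U_{p-1})\phi(U_{q-1})$, then evaluate at $x=4$ via Propositions \ref{eigenvalue4} and \ref{Propellerpolynomial2}. Your justification of $L_u(G_{p,q})=U_{p-1}\oplus U_{q-1}$ (diagonal entries stay equal to $2$ because one deletes rows and columns of the principal submatrix rather than recomputing degrees) is a detail the paper leaves implicit, and the arithmetic collecting $-4pq(2k+1)-4kpq=-4pq(3k+1)$ checks out.
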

\begin{proof}
We obtain (\ref{propellerExpanding}) by using Lemma \ref{Lpolynomial2} and $\phi(L_u(G_{p,q})) = \phi(U_{p-1})\phi(U_{q-1})$.
From (\ref{Propellerpolynomial2}) and (\ref{propellerExpanding}) and Proposition \ref{eigenvalue4}, we have
\begin{eqnarray*}
\phi(L(G);4)&=&\phi(L(G_{p,q});4)\phi(L(P_k);4)-\phi(L(G_{p,q});4)\phi(B_{k-1};4)-\phi(L(P_k);4)\phi(U_{p-1};4)\phi(U_{q-1};4)\\
            &=&\left(2(p+q)-4pq-2((-1)^q p+(-1)^p q)\right)(4k-(2k-1))-4kpq\\
           &=&2(2k+1)\left(p+q-(-1)^q p-(-1)^p q\right)-4pq(3k+1)
\end{eqnarray*}
as required.
\qed\end{proof}

Note that $\phi(L(P_{n+1}))=(x-2)\phi(L(P_n))-\phi(L(P_{n-1}))$ by Lemma \ref{Lpolynomial1}. Solving this recurrence equation, and noting $\phi(L(P_0)) = 0$ and $\phi(L(P_1)) = x$, we obtain that, for $n\ge 1$,
\begin{eqnarray}\label{LaplacianPolwhole}
          \phi(L(P_n))&=&\frac{(y+1)(y^{2n}-1)}{y^{n+1}-y^n},
        \end{eqnarray}
where $y$ satisfies the characteristic equation $y^2-(x-2)y+1=0$ with $x\neq4$. Substituting (\ref{LaplacianPolwhole}) into (b) and (c) of Lemma \ref{Lpolynomial1}, we obtain
\begin{eqnarray}
          \phi(B_n)&=&\frac{y^{2n+1}-1}{y^{n+1}-y^n},\label{LaplacianPolBn}\\
          \phi(U_n)&=&\frac{y^{2n+2}-1}{y^{n+2}-y^{n}}.\label{LaplacianPolHn}
        \end{eqnarray}
Plugging (\ref{LaplacianPolwhole}), (\ref{LaplacianPolBn}) and (\ref{LaplacianPolHn}) into (\ref{Propellerpolynomial}) and then (\ref{propellerExpanding}), and with the help of Maple, we obtain
\begin{eqnarray}\label{Ppolywhole}
 y^{n}(y-1)^3(y+1)^2\phi(L(G))+1-3y-4y^2+4y^{2n+3}+3y^{2n+4}-y^{2n+5}&=&f_L(p,q,k;y),
\end{eqnarray}
where
\begin{eqnarray*}
\begin{array}{llll}
   f_L(p,q,k;y)=& 2(-1)^{1+q}y^{2p+q+2k+3}    &+2(-1)^{1+p}y^{2q+p+2k+3}   &+2(-1)^qy^{2p+q+2k+1} \\
                &+2(-1)^py^{p+2q+2k+1}        &+3y^{2p+2q+1}+3y^{2p+2q}    &+y^{2p+3+2k}           \\
                &+y^{2q+3+2k}                 &+3y^{2p+2k+2}               &+3y^{2q+2k+2}          \\
                &+2y^{2p+1+2k}                &+2y^{2q+1+2k}               &+2(-1)^qy^{2p+2+q}\\
                &+2(-1)^py^{2q+2+p}           &+2(-1)^{1+q}y^{2p+q}        &+2(-1)^{1+p}y^{2q+p}\\
                &+2(-1)^py^{3+p+2k}           &+2(-1)^qy^{3+q+2k}          &+2(-1)^{1+p}y^{p+2k+1}\\
                &+2(-1)^{1+q}y^{q+2k+1}       &-2y^{2p+2}-2y^{2q+2}        &-3y^{2p+1}-3y^{2q+1}\\
                &-y^{2p}-y^{2q}-3y^{2k+3}     &+2(-1)^{1+p}y^{2+p}         &+2(-1)^{1+q}y^{2+q}\\
                &+2(-1)^py^{p}                &+2(-1)^qy^{q}               &-3y^{2k+2}.
\end{array}
\end{eqnarray*}

\begin{lem}\label{lem4}
No two non-isomorphic propeller graphs are $L$-cospectral.
\end{lem}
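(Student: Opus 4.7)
The plan is to exploit the explicit polynomial identity (\ref{Ppolywhole}) to reduce the cospectrality hypothesis to an equality of polynomials in $y$, and then to recover $(p, q, k)$ from the monomial structure of $f_L$.

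Suppose $G = G(p,q,k)$ and $G' = G(p',q',k')$ are two $L$-cospectral propeller graphs. Since the $L$-spectrum determines the order (Lemma \ref{spectrum}(a)), we have $p+q+k = p'+q'+k'$, so the exponent $n$ appearing in (\ref{Ppolywhole}) is the same for $G$ and $G'$. The auxiliary summand $1 - 3y - 4y^2 + 4y^{2n+3} + 3y^{2n+4} - y^{2n+5}$ on the left-hand side of (\ref{Ppolywhole}) depends only on $n$, so subtracting the two instances of the identity yields
\[
f_L(p, q, k; y) \;=\; f_L(p', q', k'; y)
\]
as polynomials in $y$. Since $f_L$ is visibly symmetric in $p$ and $q$, consistent with $G(p,q,k) \cong G(q,p,k)$, we may assume without loss of generality that $p \leq q$ and $p' \leq q'$, and it suffices to conclude $(p,q,k) = (p', q', k')$.

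The next step is to match monomials in this polynomial identity. Inspecting the explicit formula for $f_L$ I would use two convenient handles: the coefficient $-1$ appears only at the exponents $2p$ and $2q$, and the coefficient $+3$ appears at the four exponents $2p+2q+1$, $2p+2q$, $2p+2k+2$, $2q+2k+2$. In the generic regime where no two distinct exponents collide, equating the coefficient-$(-1)$ monomials on both sides immediately gives $\{p,q\} = \{p',q'\}$, and combined with the constraint $p+q+k = p'+q'+k'$ this forces $k = k'$. Equivalently, matching the four coefficient-$(+3)$ monomials reads off $p+q$, $p+k$ and $q+k$ directly, providing an independent determination of the three parameters.

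The main obstacle is handling the non-generic configurations in which two distinct exponents of $f_L$ collide and their coefficients merge. The relevant small-parameter equalities are $p = q$, $p = k+1$, $q = k+1$, together with a few sporadic cases such as $q = 2p-2$ (forcing $2p = q+2$, so a coefficient-$(-1)$ and a coefficient-$(\pm 2)$ term merge). In each such case I would carry out a short direct coefficient comparison, drawing on several auxiliary coefficients (the $+1$ at $2p+2k+3$, $2q+2k+3$, the $+2$ at $2p+2k+1$, $2q+2k+1$, and the $-3$ at $2p+1$, $2q+1$, $2k+2$, $2k+3$, each of which shifts in a predictable way), combined with the total $p+q+k = n+1$, to still recover $(p,q,k)$ uniquely. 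No individual degenerate case is conceptually deep, but cataloguing them accounts for the bulk of the work in a rigorous write-up.
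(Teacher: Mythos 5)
Your reduction to the identity $f_L(p,q,k;y)=f_L(p',q',k';y)$ is the same as the paper's, but from there you part ways: you try to recover all three parameters from the monomial structure of $f_L$ using only the order constraint $p+q+k=p'+q'+k'$. The paper also invokes part (d) of Lemma \ref{spectrum}: the number of spanning trees of a propeller graph is $pq$, so $L$-cospectrality gives the second constraint $pq=p'q'$ for free. With both constraints in hand it suffices to extract a \emph{single} parameter from $f_L$, and the paper does so by comparing only the term of smallest exponent, which (after normalizing $p\ge q$) is $2(-1)^q y^{q}$ or $-3y^{2k+2}$; either match, combined with the two constraints, pins down $(p,q,k)$ up to swapping $p$ and $q$. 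This reduces the collision analysis to essentially the single borderline case $q=2k+2$, where the merged coefficient is still nonzero.

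The gap in your version is exactly the part you defer. Because you must read off $\{p,q\}$ and a third independent quantity from coefficient ``fingerprints,'' you need to control collisions among roughly thirty exponents, and merged coefficients can imitate the values you rely on: for $q=2k+2$ even, $2(-1)^q y^{q}-3y^{2k+2}$ merges to $-y^{q}$, so a coefficient $-1$ can occur at an exponent other than $2p$ or $2q$; for $2p=q+2$ the terms $-y^{2p}$ and $2(-1)^{1+q}y^{2+q}$ merge to $-3y^{2p}$, imitating the $+3/-3$ family. Your list of degeneracies ($p=q$, $p=k+1$, $q=k+1$, $q=2p-2$) is not exhaustive --- for instance $p=q+1$ merges $-y^{2p}$ with $-2y^{2q+2}$, and coincidences such as $2p=q+2k+1$ or $2q=p$ also occur --- and none of the cases is actually checked. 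As written, the argument establishes only the generic case; the degenerate catalogue, which you yourself identify as ``the bulk of the work,'' is missing. The cleanest repair is to import the spanning-tree invariant $pq=p'q'$, which collapses the whole analysis to the one-line comparison of minimal-degree terms used in the paper.
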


\begin{proof}
Let $G$ and $G'$ be $L$-cospectral propeller graphs with $n=p+q+k-1$ and $n'=p'+q'+k'-1$ vertices, respectively. Without loss of generality, we let $p\geq q$ and $p'\geq q'$. By (a) and (d) of Lemma \ref{spectrum}, we have
\begin{eqnarray}
p+q+k&=&p'+q'+k'.\label{lem4E01}\\
pq&=&p'q'.\label{lem4E02}
\end{eqnarray}
By (\ref{Ppolywhole}), we then get
\begin{eqnarray}\label{lem4Epropeller4}
f_L(p,q,k;y)=f_L(p',q',k';y).
\end{eqnarray}
Clearly, the term in $f_L(p,q,k;y)$ with the smallest exponent is $2(-1)^qy^{q}$ or $-3y^{2k+2}$, and similarly for $f_L(p',q',k';y)$. From (\ref{lem4Epropeller4}) we have either $2(-1)^qy^{q}=2(-1)^{q'}y^{q'}$ or $-3y^{2k+2}=-3y^{2k'+2}$. In the former case, we have $q=q'$, and so $p=p'$ and $k=k'$ by (\ref{lem4E01}) and (\ref{lem4E02}). In the latter case, we have $k=k'$, and so $(p,q)=(p',q')$ by (\ref{lem4E01}) and (\ref{lem4E02}). Therefore, $G$ and $G'$ are isomorphic in each case.
\qed\end{proof}

\begin{lem}
\label{lem1}
Let $H$ be a graph that is $L$-cospectral with the propeller graph $G$. Then
\[\mathrm{deg}(H) = (5,2^{n-2},1),(4^2,2^{n-4},1^2),(4,3^3,2^{n-7},1^3),\;\mbox{or}\,\;(3^6,2^{n-10},1^4),\]
where the exponent denotes the number of vertices in $H$ having the corresponding degree.
\end{lem}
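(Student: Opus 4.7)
The plan is to extract the $L$-spectral invariants of $G$ that depend only on the degree sequence, and use them to enumerate the degree sequences compatible with $H$. The propeller graph $G$ has one vertex of degree $5$ (the hub), one vertex of degree $1$ (the tip of the attached path), and $n-2$ vertices of degree $2$; hence $|E(G)|=n+1$ and $\sum_{v\in V(G)} d_G(v)^2 = 25 + 1 + 4(n-2) = 4n+18$. From Lemma \ref{spectrum}(a)--(c) the cospectral graph $H$ inherits $|V(H)|=n$, $|E(H)|=n+1$, and connectedness; from Lemma \ref{Ldegrees} applied to $G$ and $H$ we get $\sum_v t_v = 0$ and $\sum_v(t_v^2 + 2 d_G(v) t_v) = 0$, which together force $\sum_v d_H(v)^2 = 4n+18$ as well.

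Next I would perform the shift $s_v := d_H(v) - 2$ to symmetrise the constraints. They become
\[
\sum_{v \in V(H)} s_v \;=\; 2(n+1) - 2n \;=\; 2, \qquad \sum_{v \in V(H)} s_v^2 \;=\; (4n+18) - 8(n+1) + 4n \;=\; 10,
\]
and connectedness of $H$ forces $s_v \geq -1$ for every $v$. Let $a = \#\{v : s_v = -1\}$, $b = \#\{v : s_v = 0\}$, and $c_j = \#\{v : s_v = j\}$ for $j \geq 1$. The two identities read
\[
-a + \sum_{j \geq 1} j\, c_j = 2 \qquad \text{and} \qquad a + \sum_{j \geq 1} j^2 c_j = 10,
\]
and adding them gives the single clean equation $\sum_{j \geq 1} j(j+1) c_j = 12$.

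Since $j(j+1) \geq 20$ whenever $j \geq 4$, only $c_1, c_2, c_3$ can be nonzero, and $2c_1 + 6c_2 + 12 c_3 = 12$ has exactly the four nonnegative integer solutions
\[
(c_1, c_2, c_3) \in \{(0,0,1),\; (0,2,0),\; (3,1,0),\; (6,0,0)\}.
\]
For each one I would read off $a$ from $-a + \sum j c_j = 2$ and set $b = n - a - c_1 - c_2 - c_3$, yielding respectively the four degree sequences $(5, 2^{n-2}, 1)$, $(4^2, 2^{n-4}, 1^2)$, $(4, 3^3, 2^{n-7}, 1^3)$, $(3^6, 2^{n-10}, 1^4)$ that appear in the statement.

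No serious obstacle remains: the argument reduces to an integer enumeration once the change of variable $s_v = d_H(v)-2$ is in place. The only delicate point is the appeal to Lemma \ref{spectrum}(c) to guarantee that $H$ is connected and hence contains no isolated vertex; without it the value $s_v = -2$ would be admissible and one would have to enlarge the case analysis.
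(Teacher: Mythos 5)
Your proposal is correct, and every numerical step checks out: $m=n+1$, $\sum d_G^2=4n+18$, the shifted constraints $\sum s_v=2$, $\sum s_v^2=10$ with $s_v\ge -1$, the combined equation $\sum_{j\ge1}j(j+1)c_j=12$, its four solutions, and the resulting degree sequences all agree with the statement. (Note also that once $a$ is read off from $-a+\sum_j jc_j=2$, the second constraint $a+\sum_j j^2c_j=10$ holds automatically, since the two were added to produce the combined equation; and $a\ge 0$ in all four cases, so nothing is lost.)

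Your route differs from the paper's in the way the Diophantine system is solved, and the difference is worth recording. The paper writes $\deg(H)=(5+t_1,2+t_2,\dots,2+t_{n-1},1+t_n)$, eliminates $t_1$ to obtain a quadratic $t_1^2+6t_1+a=0$, deduces $a\in\{0,5,8,9\}$ from integrality of $t_1=-3\pm\sqrt{9-a}$, and then enumerates all solutions case by case in four tables (Tables \ref{tab1}--\ref{tab4}), which occupy a substantial amount of space and must each be checked for feasibility. Your symmetrising substitution $s_v=d_H(v)-2$ exploits the fact that almost all degrees of $G$ equal $2$, collapses the two moment conditions into the single equation $\sum_{j\ge1}j(j+1)c_j=12$ with the floor $s_v\ge-1$ supplied by connectedness, and reads off the four degree sequences in one stroke. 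Both arguments rest on exactly the same spectral input (Lemma \ref{spectrum} for $n$, $m$ and connectedness; Lemma \ref{Ldegrees}, equivalently the coefficient $l_2$ of Lemma \ref{Lcoefficients}, for $\sum_v d_H(v)^2$), so the gain is purely in the bookkeeping; but your version makes it transparent that the list is exhaustive, whereas the paper's tables require the reader to trust a fairly long enumeration. The one point you rightly flag --- that Lemma \ref{spectrum}(c) is what rules out $s_v=-2$ --- is also the precise point where the $Q$-spectral analogue (Lemma \ref{Qlem1}) differs, which is why two extra degree sequences containing a $0$ appear there.
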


\begin{proof}
Suppose $\mathrm{deg}(H)=(5+t_1,2+t_2,2+t_3,\dots,2+t_{n-1},1+t_n)$. Since $\mathrm{deg}(G)=(5,2^{n-2},1)$ and $H$ is $L$-cospectral with $G$, by (c) of Lemma \ref{spectrum},
\begin{equation}\label{bounds}
t_1 \ge -4,\, t_2 \ge -1,\, \ldots,\, t_{n-1} \ge -1,\, t_n \ge 0.
\end{equation}
Moreover, by Lemma \ref{Ldegrees}, $t_1,t_2,\dots,t_n$ are integers such that
\begin{eqnarray}
&&\sum_{i=1}^nt_i=0, \label{lem1E01}\\
&&\sum_{i=1}^nt_i^2+4\sum_{i=2}^{n-1}t_i+10t_1+2t_n=0.\label{lem1E02}
\end{eqnarray}
So $t_1=-\sum_{i=2}^{n-1}t_i-t_n$. Plugging this into (\ref{lem1E02}) yields
\begin{equation}
\label{lem1E04}
t_1^2+6t_1+a=0,
\end{equation}
where $a$ is given by
\begin{equation}
\label{a}
\sum_{i=2}^{n-1}t_i^2 = a - (t_n^2-2t_n).
\end{equation}
Obviously, $a \ge t_n^2-2t_n \ge -1$.
Solving (\ref{lem1E04}) for $t_1$, we get
\begin{eqnarray}
t_1&=&-3\pm\sqrt{9-a}.\label{lem1E05}
\end{eqnarray}
Since $t_1$ is an integer and $-1\leq a\leq9$, we see that $a=0,5,8,9$. We discuss these cases one by one.

\medskip
\noindent\emph{Case 1.} $a=0$. Then $t_1=0$ as $t_1 \ge -4$ by (\ref{bounds}). Since $a=0$, we have $\sum_{i=2}^{n-1}t_i^2 = -(t_n^2-2t_n) \ge 0$, which implies $t_n = 0, 1, 2$ as $t_n \ge 0$ by (\ref{bounds}). Solving the Diophantine equations (\ref{lem1E01}) and (\ref{a}) for each $t_n$, and using (\ref{bounds}), we obtain all possibilities for $(t_2, \ldots, t_{n-1})$ and hence $\deg(H)$ as in Table \ref{tab1}. (In Tables \ref{tab1}--\ref{tab4} an exponent under the column $(t_2, \ldots, t_{n-1})$ indicates the number of times the corresponding value appears in this sequence. For example, $-1^2$ means that $-1$ appears twice.)

\medskip
\noindent\emph{Case 2.} $a=5$. Then $t_1=-1$ as $t_1 \ge -4$ by (\ref{bounds}). Since $a=5$, we have $\sum_{i=2}^{n-1}t_i^2 = 5-(t_n^2-2t_n) \ge 0$, which implies $t_n = 0, 1, 2, 3$ as $t_n \ge 0$ by (\ref{bounds}). Again, by using (\ref{bounds}), (\ref{lem1E01}) and (\ref{a}), we obtain all possibilities for $(t_2, \ldots, t_{n-1})$ and $\deg(H)$ as shown in Table \ref{tab2}.

\medskip
\noindent\emph{Case 3.} $a=8$. Then $t_1=-2$ or $t_1=-4$, and so (\ref{lem1E01}) gives $\sum_{i=2}^{n}t_i=2$ or $4$, respectively. Since $\sum_{i=2}^{n-1}t_i^2 = 8 - (t_n^2-2t_n) \ge 0$ and $t_n \ge 0$, in each case we have $t_n = 0, 1, 2, 3, 4$. So we have ten combinations in total. Using (\ref{bounds}), (\ref{lem1E01}) and (\ref{a}), we obtain all possibilities for $(t_2, \ldots, t_{n-1})$ and $\deg(H)$ as shown in Table \ref{tab3}.

\medskip
\noindent\emph{Case 4.} $a=9$. Then $t_1=-3$ and so $\sum_{i=2}^{n}t_i=3$. Since $\sum_{i=2}^{n-1}t_i^2 = 9 - (t_n^2-2t_n) \ge 0$ and $t_n \ge 0$, we have $t_n = 0, 1, 2, 3, 4$. Again, by using (\ref{bounds}), (\ref{lem1E01}) and (\ref{a}), we obtain all possibilities for $(t_2, \ldots, t_{n-1})$ and $\deg(H)$ as shown in Table \ref{tab4}. \qed\end{proof}

\begin{table}
\begin{center}
  \begin{tabular}{c|c|l|l}
\hline
$t_1$  & $t_n$  & $(t_2, \ldots, t_{n-1})$ & $\deg(H)$ \\  \hline
$0$     &  $0$    &  $(0^{n-2})$             & $(5,2^{n-2},1)$ \\ \hline
$0$     &  $1$    &  $(-1^1, 0^{n-3})$  & $(5,2^{n-2},1)$ \\ \hline
$0$     &  $2$    &  Infeasible  &  \\ \hline
  \end{tabular}
  \caption{$a=0$}
\label{tab1}
  \end{center}
\end{table}

\begin{table}
\begin{center}
  \begin{tabular}{c|c|l|l}
\hline
$t_1$  & $t_n$  & $(t_2, \ldots, t_{n-1})$ & $\deg(H)$ \\  \hline
$-1$    &  $0$    &  $(2^1, -1^1, 0^{n-4})$, $(1^3, -1^2, 0^{n-7})$  & $(4^2,2^{n-4},1^2)$, $(4,3^3,2^{n-7},1^3)$ \\ \hline
$-1$    &  $1$    &  $(1^3, -1^3, 0^{n-8})$, $(2^1, -1^2, 0^{n-5})$  & $(4,3^3,2^{n-7},1^3)$, $(4^2,2^{n-4},1^2)$ \\ \hline
$-1$    &  $2$    & $(1^2, -1^3, 0^{n-7})$   & $(4,3^3,2^{n-7},1^3)$ \\ \hline
$-1$    &  $3$    & $(-1^2, 0^{n-4})$          & $(4^2,2^{n-4},1^2)$ \\ \hline
  \end{tabular}
  \caption{$a=5$}
\label{tab2}
  \end{center}
\end{table}

\begin{table}
\begin{center}
  \begin{tabular}{c|c|l|l}
\hline
$t_1$  & $t_n$  & $(t_2, \ldots, t_{n-1})$ & $\deg(H)$ \\  \hline
$-2$    &  $0$    &   $(2^1, 1^2, -1^2, 0^{n-7})$, $(1^5, -1^3, 0^{n-10})$ & $(4,3^3,2^{n-7},1^3)$, $(3^6,2^{n-10},1^4)$ \\ \hline
$-2$    &  $1$    & $(2^1, 1^2, -1^3, 0^{n-8})$, $(1^5, -1^4, 0^{n-11})$ & $(4,3^3,2^{n-7},1^3)$, $(3^6,2^{n-10},1^4)$ \\ \hline
$-2$    &  $2$    & $(2^1, 1^1, -1^3, 0^{n-7})$, $(1^4, -1^4, 0^{n-10})$ & $(4,3^3,2^{n-7},1^3)$, $(3^6,2^{n-10},1^4)$ \\ \hline
$-2$    &  $3$    & $(1^2, -1^3, 0^{n-7})$ & $(4,3^3,2^{n-7},1^3)$ \\ \hline
$-2$    &  $4$    & Infeasible &  \\ \hline
$-4$    &  $0$    & $(2^2, 0^{n-4})$, $(2^1, 1^3, -1^1, 0^{n-7})$ & $(4^2,2^{n-4},1^2)$, $(4,3^3,2^{n-7},1^3)$ \\
          &            & $(1^6, -1^2, 0^{n-10})$ & $(3^6,2^{n-10},1^4)$ \\ \hline
$-4$    &  $1$    & $(3^1, 0^{n-4})$, $(2^2, -1^1, 0^{n-5})$ & $(5,2^{n-2},1)$, $(4^2,2^{n-4},1^2)$ \\
         &            & $(2^1, 1^3, -1^2, 0^{n-8})$, $(1^6, -1^3, 0^{n-11})$ & $(4,3^3,2^{n-7},1^3)$, $(3^6,2^{n-10},1^4)$ \\ \hline
$-4$    &  $2$    & $(2^1,1^2,-1^{2},0^{n-7})$, $(1^5,-1^3,0^{n-10})$ & $(4,3^3,2^{n-7},1^3)$, $(3^6,2^{n-10},1^4)$ \\ \hline
$-4$    &  $3$    & $(2^1,-1^{1},0^{n-4})$, $(1^3,-1^{2},0^{n-7})$ & $(4^2,2^{n-4},1^2)$, $(4,3^3,2^{n-7},1^3)$ \\ \hline
$-4$    &  $4$    & $(0^{n-2})$ & $(5,2^{n-2},1)$ \\ \hline
  \end{tabular}
  \caption{$a=8$}
\label{tab3}
  \end{center}
\end{table}

\begin{table}
\begin{center}
  \begin{tabular}{c|c|l|l}
\hline
$t_1$  & $t_n$  & $(t_2, \ldots, t_{n-1})$ & $\deg(H)$ \\  \hline
$-3$    &  $0$    & $(3^1, 0^{n-3})$, $(2^2, -1^1, 0^{n-5})$ & $(5,2^{n-2},1)$, $(4^2,2^{n-4},1^2)$ \\
          &            & $(2^1, 1^3, -1^2, 0^{n-8})$, $(1^6, -1^3, 0^{n-11})$ & $(4,3^3,2^{n-7},1^3)$, $(3^6,2^{n-10},1^4)$ \\ \hline
$-3$    &  $1$    & $(3^1, -1^1, 0^{n-4})$, $(2^2, -1^2, 0^{n-6})$ & $(5,2^{n-2},1)$, $(4^2,2^{n-4},1^2)$ \\
          &            & $(2^1, 1^3, -1^3, 0^{n-9})$, $(1^6, -1^4, 0^{n-12})$ & $(4,3^3,2^{n-7},1^3)$, $(3^6,2^{n-10},1^4)$ \\ \hline
$-3$    &  $2$    & $(2^1, 1^2, -1^3, 0^{n-8})$, $(1^5, -1^4, 0^{n-11})$ & $(4,3^3,2^{n-7},1^3)$, $(3^6,2^{n-10},1^4)$ \\ \hline
$-3$    &  $3$    & $(2^1, -1^2, 0^{n-5})$, $(1^3, -1^3, 0^{n-8})$ & $(4^2,2^{n-4},1^2)$, $(4,3^3,2^{n-7},1^3)$ \\ \hline
$-3$    &  $4$    & $(-1^1, 0^{n-3})$ & $(5,2^{n-2},1)$ \\ \hline
  \end{tabular}
  \caption{$a=9$}
\label{tab4}
  \end{center}
\end{table}

\begin{lem}\label{lem2}
Suppose the propeller graph $G$ has at most one triangle. If a graph $H$ is $L$-cospectral with $G$, then $\mathrm{deg}(H)=(5,2^{n-2},1)$.
\end{lem}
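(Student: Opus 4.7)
The plan is to use the characteristic polynomial coefficients from Lemma \ref{Lcoefficients} together with the list of four candidate degree sequences from Lemma \ref{lem1} to rule out all but $(5,2^{n-2},1)$ under the hypothesis $n_3(G)\le 1$.

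First, by Lemma \ref{lem1}, the degree sequence of $H$ lies in
$\{(5,2^{n-2},1),\,(4^2,2^{n-4},1^2),\,(4,3^3,2^{n-7},1^3),\,(3^6,2^{n-10},1^4)\}$. A direct computation shows that all four sequences give the same value of $\sum_i d_i^2=4n+18$, which is consistent with $L$-cospectrality of $G$ and $H$ via the coefficient $l_2$ in Lemma \ref{Lcoefficients} (the identity $n(G)=n(H)$ and $m(G)=m(H)$ coming from Lemma \ref{spectrum}). However, the values of $\sum_i d_i^3$ distinguish them:
\begin{eqnarray*}
\sum_i d_i^3 &=& 8n+110,\; 8n+98,\; 8n+92,\; 8n+86
\end{eqnarray*}
for the four sequences, respectively.

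Next, since $L$-cospectrality of $G$ and $H$ forces $l_3(G)=l_3(H)$, and since $n$, $m$ and $\sum_i d_i^2$ agree for $G$ and $H$, the $l_3$-formula in Lemma \ref{Lcoefficients} reduces to
\[
6\,n_3(H)-\sum_{i}d_i^3(H) \;=\; 6\,n_3(G)-\sum_i d_i^3(G).
\]
Using $\deg(G)=(5,2^{n-2},1)$, so $\sum_i d_i^3(G)=8n+110$, the three non-target candidates for $\deg(H)$ yield respectively
\[
n_3(H)=n_3(G)-2,\quad n_3(H)=n_3(G)-3,\quad n_3(H)=n_3(G)-4.
\]

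Finally, the hypothesis $n_3(G)\le 1$ makes each of the right-hand sides strictly negative, which is impossible since $n_3(H)\ge 0$. Hence none of the three alternative degree sequences can occur, and $\deg(H)=(5,2^{n-2},1)$ as claimed. I expect the argument to be essentially a bookkeeping exercise; the only place that requires care is verifying the $\sum_i d_i^2$ and $\sum_i d_i^3$ tallies for all four candidate degree sequences, and correctly reading the triangle-count relation out of the $l_3$ identity.
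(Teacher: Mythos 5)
Your proposal is correct and follows essentially the same route as the paper: both derive the identity $6n_3(H)-\sum_i d_H(v_i)^3=6n_3(G)-\sum_i d_G(v_i)^3$ from the $l_3$ coefficient in Lemma \ref{Lcoefficients}, compute $\sum_i d_i^3 = 8n+110,\,8n+98,\,8n+92,\,8n+86$ for the four candidate sequences, and rule out the last three because they would force $n_3(H)<0$ when $n_3(G)\le 1$. Your cube-sum tallies match the paper's exactly, so this is the intended argument.
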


\begin{proof}
Since $H$ is $L$-cospectral with $G$, by Lemma \ref{lem1},
\[\mathrm{deg}(H) = (5,2^{n-2},1),(4^2,2^{n-4},1^2),(4,3^3,2^{n-7},1^3),\;\, \mbox{or}\;\, (3^6,2^{n-10},1^4).\]
In view of the formula for $l_3$ in Lemma \ref{Lcoefficients}, we obtain
\begin{eqnarray}
6n_3(G)-\sum_{v \in V(G)}d_G(v)^3=6n_3(H)-\sum_{v \in V(H)}d_H(v)^3.\label{lem2E01}
\end{eqnarray}
Note that $n_3(G) = 1$ or $0$ since $G$ contains at most one triangle by our assumption.

\medskip
\noindent\emph{Case 1.} $\mathrm{deg}(H)=(4^2,2^{n-4},1^2)$. In this case by (\ref{lem2E01}) we have
\begin{eqnarray}
6n_3(G)-(8n+110)=6n_3(H)-(8n+98), \label{lem2E02}
\end{eqnarray}
and so $n_3(H)=-1$ or $-2$, depending on whether $n_3(G) = 1$ or $0$. This is a contradiction because $n_3(H) \ge 0$ by its definition.

\medskip
\noindent\emph{Case 2.} $\mathrm{deg}(H)=(4,3^3,2^{n-7},1^3)$. By (\ref{lem2E01}), we have
\begin{eqnarray}
6n_3(G)-(8n+110)=6n_3(H)-(8n+92),\label{lem2E03}
\end{eqnarray}
which leads to $n_3(H)=-2$ or $-3$, again a contradiction.

\medskip
\noindent\emph{Case 3.} $\mathrm{deg}(H)=(3^6,2^{n-10},1^4)$. Then (\ref{lem2E01}) implies
\begin{eqnarray}
6n_3(G)-(8n+110)=6n_3(H)-(8n+86).\label{lem2E04}
\end{eqnarray}
This leads to $n_3(H)=-3$ or $-4$, which is a contradiction.

Therefore, the only possibility is $\mathrm{deg}(H)=(5,2^{n-2},1)$.
\qed\end{proof}

\begin{lem}\label{lem3}
Suppose the propeller graph $G$ has two triangles. If a graph $H$ is $L$-cospectral to $G$, then $\mathrm{deg}(H)=(5,2^{n-2},1)$ or $(4^2,2^{n-4},1^2)$, and the latter occurs only when $H$ is triangle-free.
\end{lem}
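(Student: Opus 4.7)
The plan is to mimic the proof of Lemma \ref{lem2} almost verbatim, changing only the input from $n_3(G)\le 1$ to $n_3(G)=2$. First, I would invoke Lemma \ref{lem1} to cut $\deg(H)$ down to one of the four candidate sequences
\[
(5,2^{n-2},1),\ (4^2,2^{n-4},1^2),\ (4,3^3,2^{n-7},1^3),\ (3^6,2^{n-10},1^4).
\]
Since $G$ and $H$ are $L$-cospectral, they share the number of vertices, the number of edges, and the value of $\sum_i d_i^{2}$ (the last via the formula for $l_{2}$ in Lemma \ref{Lcoefficients}), so equating the coefficients $l_{3}$ of $\phi(L(G))$ and $\phi(L(H))$ reduces exactly to the identity used in Lemma \ref{lem2}:
\[
6\,n_3(G)-\sum_{v\in V(G)}d_G(v)^3 \;=\; 6\,n_3(H)-\sum_{v\in V(H)}d_H(v)^3.
\]

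Substituting $n_3(G)=2$ and $\sum_v d_G(v)^3=8n+110$ (the value already computed in Lemma \ref{lem2}), I would then plug in the sum of cubes for each of the three non-target candidate sequences, exactly as in Cases 1--3 of Lemma \ref{lem2}. The sequence $(4,3^3,2^{n-7},1^3)$ has $\sum d_H(v)^3=8n+92$ and the sequence $(3^6,2^{n-10},1^4)$ has $\sum d_H(v)^3=8n+86$; the identity above then forces $n_3(H)=-1$ and $n_3(H)=-2$ respectively, both impossible since $n_3(H)\ge 0$. The remaining sequence $(4^2,2^{n-4},1^2)$ has $\sum d_H(v)^3=8n+98$, and the identity gives $6\,n_3(H)=0$, i.e.\ $n_3(H)=0$; this is consistent, and records exactly the extra conclusion that $H$ is triangle-free in this case.

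The main point to handle carefully, rather than a real obstacle, is precisely this last sign change: with two triangles in $G$ the $(4^2,2^{n-4},1^2)$ case is no longer absurd (as it was in Lemma \ref{lem2}) but survives as a \emph{conditional} possibility, requiring $n_3(H)=0$. So unlike Lemma \ref{lem2}, I cannot eliminate this sequence outright; I must instead carry the triangle-free condition into the statement. Combining the three case analyses gives that $\deg(H)$ is either $(5,2^{n-2},1)$ (with no triangle constraint) or $(4^2,2^{n-4},1^2)$ (with $H$ necessarily triangle-free), which is the claim.
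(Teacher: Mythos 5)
Your proposal is correct and follows exactly the route the paper takes: the paper's proof of Lemma \ref{lem3} is literally the one-line remark that it follows from equations (\ref{lem2E02})--(\ref{lem2E04}) of Lemma \ref{lem2} with $n_3(G)=2$, which is precisely the computation you carry out ($n_3(H)=n_3(G)-3=-1$ and $n_3(G)-4=-2$ rule out the two larger-degree-sum sequences, while $n_3(H)=n_3(G)-2=0$ leaves $(4^2,2^{n-4},1^2)$ alive only for triangle-free $H$). Your write-up is simply a more explicit version of the same argument.
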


\begin{proof}
The proof is straightforward by using (\ref{lem2E02}), (\ref{lem2E03}) and (\ref{lem2E04}).
\qed\end{proof}

\begin{figure}[here]
\centering
\vspace{-0.3cm}
\includegraphics*[height=6cm]{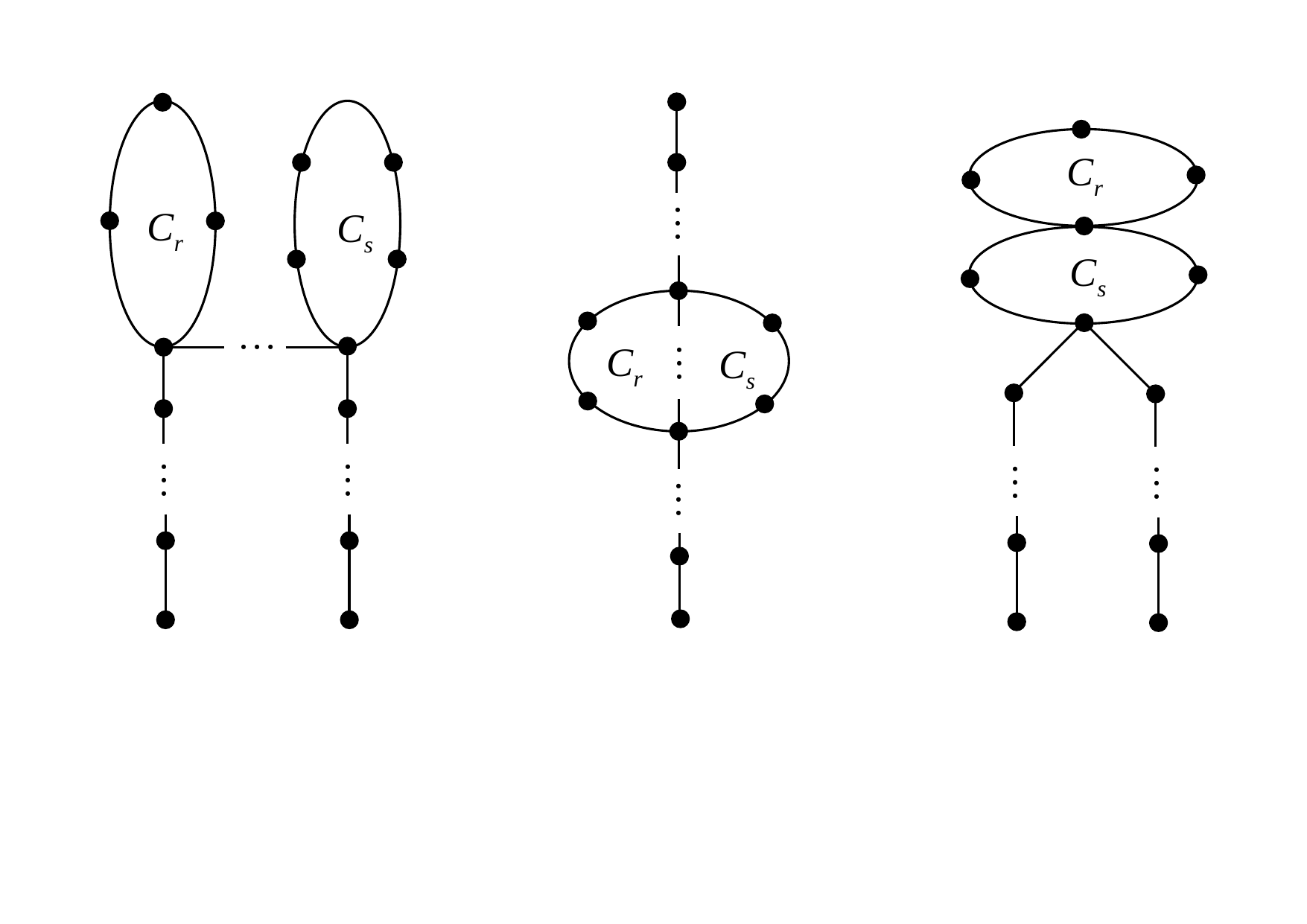}
\vspace{-1.8cm}
\caption{\small Proof of Theorem \ref{thm1}: possible cases for $H$.}
\label{f2}
\end{figure}

Now we are ready to prove Theorem \ref{thm1}.

\bigskip

\begin{Tproof} \textbf{of Theorem \ref{thm1}}.~~Let $G$ be a propeller graph with at most one triangle. Suppose $H$ is $L$-cospectral with $G$. By Lemma \ref{lem2}, $\mathrm{deg}(H)=(5,2^{n-2},1)$. Since $H$ is connected by (c) of Lemma \ref{spectrum}, it follows that $H$ must be a propeller graph. By Lemma \ref{lem4}, we conclude that $H$ and $G$ are isomorphic.

Let $G$ be a propeller graph with two triangles; that is, $p=q=3$. Suppose $H$ is $L$-cospectral with $G$. By Lemma \ref{lem3}, $\mathrm{deg}(H)=(5,2^{n-2},1)$ or $(4^2,2^{n-4},1^2)$, and in the latter case $H$ is triangle-free. In the case when $\mathrm{deg}(H)=(5,2^{n-2},1)$, similar to the argument in the first paragraph, it is straightforward to show that $H$ and $G$ are isomorphic.

Consider the case $\mathrm{deg}(H)=(4^2,2^{n-4},1^2)$, where $H$ is triangle-free. Since $H$ is connected by (c) of Lemma \ref{spectrum}, there are three possibilities for $H$ as shown in Fig. \ref{f2}. However, since $H$ is triangle-free (that is, $r, s \ge 4$), in each case $H$ has more than 9 spanning trees, whilst $G$ has exactly $pq=9$ spanning trees. This contradicts (d) of Lemma \ref{spectrum}.

Therefore, $H$ is isomorphic to $G$ and the proof is complete.
\qed \end{Tproof}

\section{Proof of Theorem \ref{thm2}}
\label{sec:Q}

Throughout this section $G$ is a propeller graph with $n=p+q+k-1$ vertices as shown in Fig. \ref{f1}. Applying Lemma \ref{cycle} to $G$, with $u$ the vertex of degree 5 in $G$, we obtain
\begin{eqnarray}\label{relation}
 \phi(A(G);x)&=&x\phi(A(P_{p-1}))\phi(A(P_{q-1}))\phi(A(P_{k}))-2\phi(A(P_{p-2}))\phi(A(P_{q-1}))\phi(A(P_{k})) \nonumber\\
           & &-2\phi(A(P_{p-1}))\phi(A(P_{q-2}))\phi(A(P_{k}))-\phi(A(P_{p-1}))\phi(A(P_{q-1}))\phi(A(P_{k-1}))\nonumber\\
           & &-2\phi(A(P_{p-1}))\phi(A(P_{k}))-2\phi(A(P_{q-1}))\phi(A(P_{k})).
\end{eqnarray}

The next lemma follows from (\ref{relation}) and $\phi(A(P_n),2)=n+1$ \cite{kn:Ramezani09}.
\begin{lem}\label{A2equal}
 $\phi(A(G);2)=-(3k+2)pq.$
\end{lem}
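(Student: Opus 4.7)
The plan is to simply evaluate the identity (\ref{relation}) at $x=2$ and collect terms. By the cited fact $\phi(A(P_n);2) = n+1$, each factor on the right-hand side of (\ref{relation}) is immediately accessible: $\phi(A(P_{p-1});2)=p$, $\phi(A(P_{q-1});2)=q$, $\phi(A(P_{p-2});2)=p-1$, $\phi(A(P_{q-2});2)=q-1$, $\phi(A(P_k);2)=k+1$, and $\phi(A(P_{k-1});2)=k$. So the whole computation reduces to substituting these integer values into (\ref{relation}) and simplifying.

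The only step that needs any care is the algebraic cancellation. After substitution, one factors out $(k+1)$ from the five terms involving it and isolates the single term $-pqk$. Inside the bracket the expression becomes
\[
pq - (p-1)q - p(q-1) - p - q = pq - pq + q - pq + p - p - q = -pq,
\]
so the five $(k+1)$-terms contribute $-2pq(k+1)$. Adding the remaining $-pqk$ then gives $-pq(2(k+1)+k) = -(3k+2)pq$, which is the claimed identity.

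There is no real obstacle here: the identity is a direct algebraic consequence of (\ref{relation}) and a one-line evaluation of path characteristic polynomials at $x=2$. The only thing to double-check is that the coefficients and shifted indices in (\ref{relation}) have been read correctly, so in writing the proof I would display the substitution explicitly before performing the cancellation, so that the reader can verify each term.
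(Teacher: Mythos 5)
Your proposal is correct and is exactly the paper's argument: the authors likewise obtain the lemma by substituting $x=2$ into (\ref{relation}) and using $\phi(A(P_n);2)=n+1$, merely omitting the arithmetic you spell out. Your cancellation $-2pq(k+1)-pqk=-(3k+2)pq$ checks out, so there is nothing to add.
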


In \cite{kn:Ramezani09}, the adjacency characteristic polynomial of $P_n$ with $n\geq1$ is given as follows:
\begin{eqnarray}\label{rrecurrence}
             \phi(A(P_n);x)=\frac{y^{2n+2}-1}{y^{n+2}-y^n},
           \end{eqnarray}
where $y$ satisfies $y^2-x y+1=0$ with $x\neq2$. Substituting (\ref{rrecurrence}) into (\ref{relation}), by using Maple, we obtain
\begin{eqnarray}\label{AUn1}
y^n(y^2-1)^3\phi(A(G))+1-4y^2-y^{2n+6}+4y^{2n+4}=f_A(p,q,k;y),
\end{eqnarray}
where $n=p+q+k-1$ and
\begin{eqnarray*}
\begin{array}{lllll}
   f_A(p,q,k;y)=&-2y^{4+2k+p+2q}    &-2y^{4+2k+q+2p}   &+2y^{2k+2+p+2q}    &+2y^{2k+2+q+2p}\\
                &+3y^{2p+2q}        &+2y^{2+p+2q}      &+2y^{2+q+2p}       &-2y^{p+2q}\\
                &-2y^{q+2p}         &-2y^{2+2p}        &-2y^{2+2q}         &-y^{2p}-y^{2q}\\
                &+y^{2k+4+2p}       &+y^{2k+4+2q}      &+2y^{2k+2+2p}      &+2y^{2k+2+2q}    \\
                &+2y^{2k+4+p}       &+2y^{2k+4+q}      &-2y^{2k+2+p}       &-2y^{2k+2+q} \\
                &-2y^{2+p}          &-2y^{2+q}         &+2y^p+2y^q         &-3y^{2k+4}.
\end{array}
\end{eqnarray*}
\begin{lem}\label{Anonpropepeller}
No two non-isomorphic propeller graphs are $A$-cospectral.
\end{lem}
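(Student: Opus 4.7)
The plan is to mimic the proof of Lemma \ref{lem4}, replacing the Laplacian characteristic polynomial and identity \eqref{Ppolywhole} by their adjacency analogues $\phi(A(\cdot))$ and \eqref{AUn1}. Let $G$ and $G'$ be two $A$-cospectral propeller graphs with parameter triples $(p,q,k)$ and $(p',q',k')$. Without loss of generality assume $p \geq q$ and $p' \geq q'$. Since $A$-cospectral graphs share the number of vertices, we obtain $p+q+k=p'+q'+k'$. Moreover, Lemma \ref{A2equal} forces $\phi(A(G);2)=\phi(A(G');2)$, which gives the second Diophantine relation $(3k+2)pq=(3k'+2)p'q'$. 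Finally, since $\phi(A(G))=\phi(A(G'))$ and $n=n'$, the entire left-hand side of \eqref{AUn1} coincides for both graphs, leaving the polynomial identity $f_A(p,q,k;y)=f_A(p',q',k';y)$.

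The heart of the argument is to compare the term of smallest exponent on each side of this polynomial identity. Enumerating the exponents in the explicit formula for $f_A$ and using $p\geq q\geq 3$, $k\geq 1$, one checks that the smallest-exponent term on the left is either $+2y^q$ (if $q<2k+4$) or $-3y^{2k+4}$ (if $q>2k+4$), and similarly for the right-hand side with $p',q',k'$. Because these two candidates carry coefficients of opposite sign and different magnitude ($+2$ versus $-3$), they cannot be matched across the equation, so either $q=q'$ or $k=k'$ must hold.

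To finish, combine this with the two Diophantine constraints above. If $k=k'$, then $(3k+2)pq=(3k+2)p'q'$ reduces to $pq=p'q'$, and together with $p+q=p'+q'$ one obtains $\{p,q\}=\{p',q'\}$, hence $(p,q)=(p',q')$ by the ordering. If instead $q=q'$, then $p+k=p'+k'$ and $(3k+2)p=(3k'+2)p'$; substituting $p=p'+k'-k$ from the former into the latter yields $3(k-k')p'=(3k+2)(k-k')$. If $k\neq k'$ this forces $3p'=3k+2$, which is impossible since $3k+2\not\equiv 0\pmod 3$. Therefore $k=k'$ and $p=p'$, and in every case $(p,q,k)=(p',q',k')$, proving $G\cong G'$.

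The main obstacle is the boundary case $q=2k+4$ (and the analogous $q'=2k'+4$), where the terms $+2y^q$ and $-3y^{2k+4}$ collapse into a single contribution of coefficient $-1$ (or $+1$ if additionally $p=q$). Since other terms in $f_A$ such as $-y^{2p}$ and $-y^{2q}$ also carry coefficient $\pm 1$, one must carefully check that the smallest-exponent matching still forces both sides into the boundary regime simultaneously, so that $q=q'$ and $k=k'$ still follow. I expect this to be handled by inspecting the next-smallest exponent together with the parameter equations above, but it is the only part of the argument needing delicate bookkeeping.
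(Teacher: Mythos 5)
Your proposal is correct and follows essentially the same route as the paper's own proof: the same three identities ($n=n'$, $\phi(A(\cdot);2)$ via Lemma \ref{A2equal}, and $f_A(p,q,k;y)=f_A(p',q',k';y)$ via \eqref{AUn1}), the same smallest-exponent comparison, and the same endgame with the two Diophantine relations. The boundary case you flag does resolve cleanly without looking at the next exponent --- the only terms that can sit at the minimal exponent are $2y^p$, $2y^q$ and $-3y^{2k+4}$, so the possible leading coefficients are $2$, $4$, $-3$, $-1$, $1$ according to whether $p=q$ and how $q$ compares with $2k+4$; these are pairwise distinct, so the two sides must fall into the same configuration and either $q=q'$ or $k=k'$ follows in every case (a subtlety the paper itself glosses over).
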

\begin{proof}
Let $G'$ be a propeller graph with order $n'=p'+q'+k'-1$. Suppose that $G'$ and
$G$ are $A$-cospectral. Without loss of generality, we may assume $p\geq q$ and $p'\geq q'$. Since cospectral graphs have the same order, we have
\begin{eqnarray}\label{Asamenumber}
 p+q+k=p'+q'+k'.
\end{eqnarray}
Lemma \ref{A2equal} implies
\begin{eqnarray}\label{A2numberequal}
 (3k+2)pq=(3k'+2)p'q'.
\end{eqnarray}
By (\ref{AUn1}), we have
\begin{eqnarray}\label{Afequal}
 f_A(p,q,k;y)=f_A(p',q',k';y).
\end{eqnarray}
The term in $f_A(p,q,k;y)$ with the smallest exponent is $-3y^{2k+4}$ or $2y^{q}$, and similarly for $f_A(p',q',k';y)$. From (\ref{Afequal}) we have either $-3y^{2k+4}=-3y^{2k'+4}$ or $2y^{q}=2y^{q'}$. In the former case, we have $k=k'$, and so $(p,q)=(p',q')$ by (\ref{Asamenumber}) and (\ref{A2numberequal}). In the latter case, we have $q=q'$. Suppose $k\neq k'$. Without loss of generality, let $k'= k+i$ where $i \ge 1$. Substituting back into (\ref{Asamenumber}), we get $p'=p-i$, and then
$(3i+3k+2-3p)i=0$, via expressing $p',q',k'$ by $p,q,k$ and $i$ in (\ref{A2numberequal}). Clearly, $3i+3k+2-3p\neq0$, a contradiction. So, $k=k'$, and then $p=p'$.  Therefore, $G$ and $G'$ are isomorphic in each case.
\qed\end{proof}

Since the subdivision graph of a propeller graph $G$ is also a propeller graph, Lemmas \ref{Anonpropepeller} and \ref{QA} immediately imply the following result.

\begin{lem}\label{Anonprop}
No two non-isomorphic propeller graphs are $Q$-cospectral.
\end{lem}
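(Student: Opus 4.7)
My plan is to exploit the reduction from $Q$-cospectrality to adjacency cospectrality of subdivision graphs provided by Lemma~\ref{QA}, combined with the rigidity result for propeller graphs under adjacency cospectrality already established in Lemma~\ref{Anonpropepeller}. The whole argument should take only a few lines, so the proposal is really a verification that the two hypotheses of these lemmas are met and that we can descend back from subdivisions to the original graphs.

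First I would observe that if $G$ is a propeller graph with parameters $(p,q,k)$ (the two cycle lengths and the number of vertices in the attached path), then $\mathcal{S}(G)$ is again a propeller graph. Indeed, subdivision replaces each edge by a path of length two, so the cycles $C_p$ and $C_q$ turn into $C_{2p}$ and $C_{2q}$, the pendant path gets doubled in its number of edges, and the unique degree-$4$ vertex of $G$ retains degree $4$ in $\mathcal{S}(G)$ because its neighbourhood consists of four subdivision vertices. This gives $\mathcal{S}(G)$ the shape of a propeller graph with doubled cycle lengths and a suitably lengthened path; in particular, distinct propeller graphs $G\neq H$ produce distinct subdivisions $\mathcal{S}(G)\neq\mathcal{S}(H)$ because the parameters of $G$ can be read off from those of $\mathcal{S}(G)$ (the cycle lengths of the subdivision are even and recover $p,q$ by halving, and similarly for the path length).

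Next, suppose $G$ and $H$ are $Q$-cospectral propeller graphs. By Lemma~\ref{QA}, $\mathcal{S}(G)$ and $\mathcal{S}(H)$ are $A$-cospectral. By the previous paragraph, both are propeller graphs, so Lemma~\ref{Anonpropepeller} forces $\mathcal{S}(G)\cong\mathcal{S}(H)$. Finally, as noted above, the parameters of a propeller graph are determined by the parameters of its subdivision, so $G\cong H$.

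There is essentially no obstacle here beyond making the subdivision observation precise. The only mild care needed is to confirm that $\mathcal{S}(G)$ really is a propeller graph in the defined sense (two cycles sharing a single degree-$4$ vertex with a path attached there), which follows directly from how subdivision acts on the three structural pieces and the fact that the degree-$4$ vertex is preserved, and to note injectivity of $G\mapsto\mathcal{S}(G)$ on propeller graphs, which is immediate from inspecting parameters.
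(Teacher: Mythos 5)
Your proposal is correct and is essentially identical to the paper's argument: the authors also note that the subdivision of a propeller graph is again a propeller graph and then combine Lemma~\ref{QA} with Lemma~\ref{Anonpropepeller}. You simply spell out in more detail the (routine) facts that the paper leaves implicit, namely that $\mathcal{S}(G)$ has parameters recoverable from those of $G$ and that $G\mapsto\mathcal{S}(G)$ is injective on propeller graphs.
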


\begin{lem}\label{interlacingthmAA}
Let $G$ be a propeller graph. Then $\lambda_2(G)<2$.
\end{lem}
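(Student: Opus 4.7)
The plan is to invoke Cauchy's interlacing theorem after deleting the unique vertex $u$ of degree $4$ in the propeller graph $G$. Since $u$ lies on both cycles $C_p$ and $C_q$ and is also the attachment point of the pendant path, removing $u$ splits $G$ into three vertex-disjoint paths: the cycle $C_p$ becomes $P_{p-1}$, the cycle $C_q$ becomes $P_{q-1}$, and the attached path remains as $P_k$. Thus $G - u$ is isomorphic to $P_{p-1} \cup P_{q-1} \cup P_k$.

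Next, applying the Cauchy interlacing theorem to the principal submatrix $A(G - u)$ of $A(G)$ yields $\lambda_2(G) \leq \lambda_1(G - u)$. Because the adjacency spectrum of a disjoint union is the union of the spectra of its components, we have
$$
\lambda_1(G-u) \;=\; \max\{\lambda_1(P_{p-1}),\; \lambda_1(P_{q-1}),\; \lambda_1(P_k)\}.
$$
Using the classical formula $\lambda_1(P_m) = 2\cos(\pi/(m+1))$, which is strictly less than $2$ for every $m \geq 1$, I get $\lambda_1(G-u) < 2$, and hence $\lambda_2(G) < 2$.

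There is no substantive obstacle in this argument; once the isomorphism type of $G - u$ is identified, the conclusion is a three-line application of interlacing together with the known spectral radius of a path. The only point requiring care is that $p, q \geq 3$ and $k \geq 1$ so that each of the three path components is non-empty (or, in the degenerate case $p = 3$ or $q = 3$, that $P_{p-1} = P_2$ still has $\lambda_1 = 1 < 2$), which is part of the standing definition of a propeller graph.
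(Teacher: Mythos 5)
Your proof is correct and follows exactly the same route as the paper: delete the degree-4 vertex $u$, observe that $G-u \cong P_{p-1}\cup P_{q-1}\cup P_k$, apply Cauchy interlacing to get $\lambda_2(G)\le \lambda_1(G-u)$, and use the fact that every path has adjacency spectral radius strictly less than $2$. The explicit formula $\lambda_1(P_m)=2\cos(\pi/(m+1))$ you cite is a slightly more detailed justification of that last inequality than the paper gives, but the argument is the same.
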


\begin{proof} Let $u$ be the vertex of degree 4 in $G$.
By the Interlacing Theorem \cite{kn:Godsil01} for the $A$-spectrum, we obtain
\begin{eqnarray*}
\lambda_2(G)\leq\lambda_1(G-u)=
\lambda_1\left(P_{q-1}\cup P_{p-1}\cup P_{k}\right)<2,
\end{eqnarray*}
where the last inequality holds because the largest eigenvalue for the $A$-spectrum of a path is less than 2. \qed\end{proof}

\begin{cor}\label{interlacingthm}
Let $G$ be a propeller graph. Then $\lambda_2\left(\mathcal
{S}(G)\right)<2$.
\end{cor}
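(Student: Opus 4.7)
The plan is essentially to reduce the corollary to Lemma \ref{interlacingthmAA} via the structural observation (already noted just above Lemma \ref{Anonprop} in the paper) that the subdivision graph of a propeller graph is again a propeller graph. Specifically, if $G$ is a propeller graph with parameters $(p,q,k)$, meaning that it consists of cycles $C_p$ and $C_q$ sharing a common degree-$4$ vertex $u$ together with a pendant path of length $k$ attached at $u$, then replacing every edge of $G$ by a path of length two doubles each of the two cycle lengths and doubles the length of the attached path. The vertex $u$ remains the unique vertex of degree $4$ in $\mathcal{S}(G)$, and every new subdivision vertex has degree $2$. Hence $\mathcal{S}(G)$ is a propeller graph with parameters $(2p,2q,2k)$.

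Having made this observation, I would simply apply Lemma \ref{interlacingthmAA} to the propeller graph $\mathcal{S}(G)$ in place of $G$ to conclude $\lambda_2(\mathcal{S}(G)) < 2$. No interlacing argument needs to be re-run; the heavy lifting was already done in the previous lemma, whose proof used only that removing the degree-$4$ vertex of a propeller graph leaves a disjoint union of three paths, each of whose largest adjacency eigenvalue is strictly less than $2$.

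There is no real obstacle: the only thing to verify is that subdividing each edge of $G$ truly yields a propeller graph, and this is immediate from the definitions of the subdivision graph and of a propeller graph. Thus the corollary follows in one line once the subdivision observation is recorded.
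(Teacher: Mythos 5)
Your proposal is correct and is precisely the argument the paper intends: the corollary is stated without proof, but the paper itself records the key observation (just before Lemma \ref{Anonprop}) that the subdivision graph of a propeller graph is again a propeller graph, so Lemma \ref{interlacingthmAA} applied to $\mathcal{S}(G)$ gives the result. Your parameter bookkeeping ($(p,q,k)\mapsto(2p,2q,2k)$) is also accurate.
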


\begin{figure}[here]
\centering
\vspace{-0.7cm}
\includegraphics*[height=6cm]{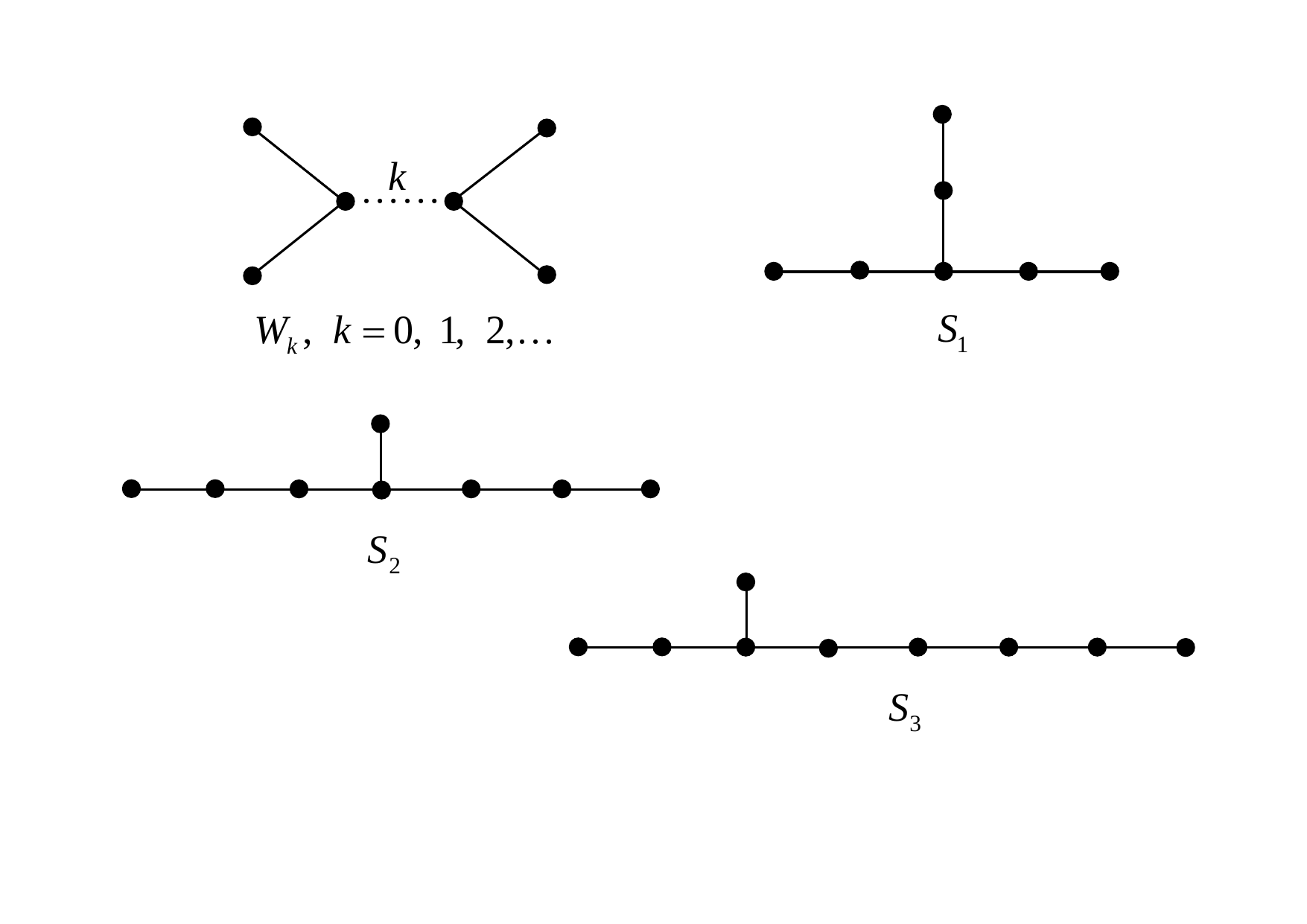}
\vspace{-1.4cm}
\caption{Smith graphs $W_k$, $S_1$, $S_2$ and $S_3$.}\label{smith}
\end{figure}

A connected graph which satisfies $\lambda_1=2$ is called a \emph{Smith graph} \cite{kn:Smith70}. All Smith graphs are known in \cite{kn:Smith70}. They are cycles $C_n$ ($n \ge 3$) and the graphs depicted in Fig. \ref{smith}, where in $W_k$, $k$ is the length of the path joining the middle vertices of the two copies of $P_3$. (Note that $W_0=K_{1,4}$.)

\begin{lem}\label{no2cycles}
Let $H$ be a graph that is $Q$-cospectral with the propeller graph $G$. Then $H$
does not contain two vertex-disjoint cycles as its subgraph.
\end{lem}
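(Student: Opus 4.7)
The plan is to pass from $H$ to its subdivision $\mathcal{S}(H)$ and derive a contradiction with the bound $\lambda_2(\mathcal{S}(G))<2$ from Corollary \ref{interlacingthm}.

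Suppose, for a contradiction, that $H$ contains two vertex-disjoint cycles $C_1$ and $C_2$, of lengths $r$ and $s$ respectively. First I would observe that under subdivision a cycle of length $\ell$ becomes a cycle of length $2\ell$ in $\mathcal{S}(H)$, and, crucially, this subdivided cycle is actually an \emph{induced} subgraph of $\mathcal{S}(H)$: its vertex set is $V(C_i)$ together with the $|E(C_i)|$ new subdivision vertices placed on the edges of $C_i$, and in $\mathcal{S}(H)$ there are no edges between two original vertices and no edges between two subdivision vertices, so no extra edges can appear. Because $C_1$ and $C_2$ are vertex-disjoint, the corresponding subdivided cycles in $\mathcal{S}(H)$ use disjoint vertex sets (both the original vertices and the freshly inserted subdivision vertices), so together they form an induced subgraph of $\mathcal{S}(H)$ isomorphic to the disjoint union $C_{2r}\cup C_{2s}$.

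Next, the spectrum of this induced subgraph is the union of the $A$-spectra of $C_{2r}$ and $C_{2s}$. Since every cycle has largest adjacency eigenvalue equal to $2$, the value $2$ appears with multiplicity at least two in the spectrum of $C_{2r}\cup C_{2s}$; in particular the second-largest eigenvalue of this induced subgraph is exactly $2$. By the Interlacing Theorem applied to $\mathcal{S}(H)$ and its principal submatrix on the vertex set of $C_{2r}\cup C_{2s}$, we obtain
\[
\lambda_2(\mathcal{S}(H))\ \ge\ \lambda_2(C_{2r}\cup C_{2s})\ =\ 2.
\]

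Finally, since $G$ and $H$ are $Q$-cospectral, Lemma \ref{QA} gives that $\mathcal{S}(G)$ and $\mathcal{S}(H)$ are $A$-cospectral, so $\lambda_2(\mathcal{S}(H))=\lambda_2(\mathcal{S}(G))<2$ by Corollary \ref{interlacingthm}. This contradicts the inequality above, completing the proof.

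The only non-routine point I expect is verifying that the subdivisions of $C_1$ and $C_2$ really sit inside $\mathcal{S}(H)$ as a vertex-disjoint pair of \emph{induced} cycles (so that interlacing applies and yields a genuine $\lambda_2\ge 2$); everything else is a direct combination of Lemma \ref{QA}, Corollary \ref{interlacingthm}, and Cauchy interlacing.
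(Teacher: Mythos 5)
Your proof is correct and follows essentially the same route as the paper: pass to $\mathcal{S}(H)$ via Lemma \ref{QA}, invoke $\lambda_2(\mathcal{S}(H))=\lambda_2(\mathcal{S}(G))<2$ from Corollary \ref{interlacingthm}, and rule out two vertex-disjoint cycles because each cycle contributes an eigenvalue $2$. The paper leaves the interlacing step implicit, whereas you carefully verify that the two subdivided cycles form an induced copy of $C_{2r}\cup C_{2s}$ in $\mathcal{S}(H)$, which is a worthwhile detail to make explicit.
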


\begin{proof}
Since $H$ is $Q$-cospectral with $G$, by Lemma \ref{QA}, $\mathcal {S}(H)$ is $A$-cospectral to $\mathcal {S}(G)$. This together with Corollary \ref{interlacingthm} implies $\lambda_2(\mathcal {S}(H))=\lambda_2(\mathcal {S}(G))<2$. Since the largest eigenvalue for the $A$-spectrum of a cycle is 2, it follows that $\mathcal {S}(H)$ does not contain two vertex-disjoint cycles. Since $\mathcal {S}(H)$ is the subdivision graph of $H$, the same result holds for $H$. \qed\end{proof}

\begin{lem}
\label{Qlem1}
Let $H$ be a graph that is $Q$-cospectral with the propeller graph $G$. Then
\begin{eqnarray}
  \mathrm{deg}(H)&=&(5,2^{n-2},1),(4^2,2^{n-4},1^2),(4,3^3,2^{n-7},1^3), (3^6,2^{n-10},1^4), \nonumber \\
   & & (4,3^2,2^{n-4},0),\;\mbox{or}\,\; (3^5,2^{n-7},1,0). \label{eq:Qlem1}
\end{eqnarray}
\end{lem}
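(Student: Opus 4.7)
The plan is to imitate the proof of Lemma~\ref{lem1}, exploiting that Lemma~\ref{Ldegrees} applies verbatim to $Q$-cospectral pairs and that Lemma~\ref{Qtriangles}(a) gives $|V(H)|=|V(G)|=n$. I would write
\[
\mathrm{deg}(H)=(5+t_1,\,2+t_2,\ldots,2+t_{n-1},\,1+t_n)
\]
with $t_i\in\mathbb{Z}$, and repeat the algebraic manipulation in the proof of Lemma~\ref{lem1}: the two identities $\sum_{i=1}^{n} t_i = 0$ and $\sum_{i=1}^{n}(t_i^2+2d_it_i)=0$ reduce, after eliminating $\sum_{i=2}^{n-1}t_i$, to the quadratic
\[
t_1^2+6t_1+a=0,\qquad a=\sum_{i=2}^{n-1}t_i^2+(t_n^2-2t_n).
\]

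The key new feature is that $Q$-cospectrality does not guarantee connectedness of $H$, so the sharper lower bounds on the $t_i$ used in Lemma~\ref{lem1} (via Lemma~\ref{spectrum}(c)) are unavailable. I would instead work with the weaker bounds $t_1\ge -5$, $t_i\ge -2$ for $2\le i\le n-1$, and $t_n\ge -1$, which permit vertices of degree $0$. Integrality of $t_1$ combined with $-1\le a\le 9$ still forces $a\in\{0,5,8,9\}$, and $t_1=-3\pm\sqrt{9-a}$ now yields the integer possibilities $t_1\in\{0\}$, $\{-1,-5\}$, $\{-2,-4\}$, and $\{-3\}$ respectively. The root $t_1=-5$, corresponding to the original degree-five vertex of $G$ becoming isolated in $H$, is the genuinely new one; it occurs only when $a=5$.

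The remaining step is a finite case analysis analogous to Tables~\ref{tab1}--\ref{tab4}, now enlarged by the row $t_1=-5$, the column $t_n=-1$, and the admissibility of $t_i=-2$ in the interior. For each admissible triple $(a,t_1,t_n)$, with $t_n$ constrained from above by $\sum_{i=2}^{n-1}t_i^2=a-(t_n^2-2t_n)\ge 0$, I would enumerate all integer vectors $(t_2,\ldots,t_{n-1})$ satisfying $\sum_{i=2}^{n-1}t_i=-t_1-t_n$ together with the prescribed sum of squares, and read off the resulting degree multiset. The four sequences already appearing in Lemma~\ref{lem1} will all recur; the two genuinely new sequences $(4,3^2,2^{n-4},0)$ and $(3^5,2^{n-7},1,0)$ will arise, for instance, from $(t_1,t_n)=(-5,2)$ with one $t_i=2$ and one $t_i=1$, and from $(t_1,t_n)=(-5,0)$ with five entries $t_i=1$, respectively, and they will reappear from mixed configurations with $t_1=-1$ in which either some $t_i=-2$ or $t_n=-1$. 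I expect the only real obstacle to be bookkeeping: one must enumerate every combination carefully to confirm that no further degree sequence solves the Diophantine system under the relaxed bounds.
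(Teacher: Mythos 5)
Your proposal is correct and coincides with the paper's own proof: the authors likewise write $\mathrm{deg}(H)=(5+t_1,2+t_2,\ldots,2+t_{n-1},1+t_n)$, replace the connectivity-based bounds of Lemma~\ref{lem1} by $t_1\ge -5$, $t_i\ge -2$, $t_n\ge -1$ (since $Q$-cospectrality does not force $H$ to be connected), and then repeat the Diophantine case analysis, omitting the enumeration details just as you do. The only quibble is your closing aside that both new sequences ``reappear'' with $t_1=-1$: the sequence $(3^5,2^{n-7},1,0)$ has no vertex of degree $4$, so it can only arise with $t_1\in\{-2,-3,-4,-5\}$; this does not affect the argument.
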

\begin{proof}
Suppose $\mathrm{deg}(H)=(5+t_1,2+t_2,2+t_3,\dots,2+t_{n-1},1+t_n)$. Since the connectivity of $H$ can not be determined by its $Q$-spectrum, $H$ may contain just isolated vertices as its components. Thus
\begin{equation}\label{newtbound}
t_1 \ge -5,\, t_2 \ge -2,\, \ldots,\, t_{n-1} \ge -2,\, t_n \ge -1.
\end{equation}
The rest of the proof is similar to that of Lemma \ref{lem1} and hence we omit details.
 \qed\end{proof}

\begin{lem}\label{no3triangles}
Let $H$ be a graph that is $Q$-cospectral with the propeller graph $G$. Then $H$
is a propeller graph.
\end{lem}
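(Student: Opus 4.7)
My plan is to eliminate every candidate degree sequence from Lemma \ref{Qlem1} other than $(5,2^{n-2},1)$, and then to show that any $H$ with this sequence that is $Q$-cospectral to $G$ must be a connected propeller.

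For the two sequences containing a zero, $(4,3^2,2^{n-4},0)$ and $(3^5,2^{n-7},1,0)$, an isolated component $K_1$ contributes $0$ to the $Q$-spectrum, so by cospectrality $0$ is a $Q$-eigenvalue of the connected graph $G$. This forces $G$ to be bipartite, so $p$ and $q$ are even and $n_3(G)=0$. By Lemma \ref{Qtriangles}(d) this gives $n_3(H)=4$ or $n_3(H)=5$ respectively. Writing $H=H'\cup K_1$, a pendant-count argument (every component of $H'$ must carry a cycle since a tree component would need pendants not present in the sequence, and the lone pendant in the second case cannot sit alone) together with Lemma \ref{no2cycles} forces $H'$ to be connected with cyclomatic number exactly $3$. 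A finite case analysis on how $4$ or $5$ triangles can be packed into a graph of cyclomatic number $3$, maximum degree $4$ or $3$, the prescribed degree sequence, and no two vertex-disjoint cycles then yields a contradiction. This finite bookkeeping is the step that I expect to require the most care; the key tool is that ruling out $K_4$ as a subgraph (by Lemma \ref{no2cycles}) bounds each high-degree vertex to at most two triangles.

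For the three remaining sequences $(4^2,2^{n-4},1^2)$, $(4,3^3,2^{n-7},1^3)$ and $(3^6,2^{n-10},1^4)$, Lemma \ref{Qtriangles}(d) fixes $n_3(H)-n_3(G)\in\{2,3,4\}$. Here the cyclomatic number of $H$ is $c+1$ with $c$ the number of components, and Lemma \ref{no2cycles} forces all the cycles to lie in a single component. Any other component of $H$ is therefore a tree with at least two pendants, but the bounded pendant budget of each sequence leaves only finitely many possible configurations to check. In the remaining connected case the two independent cycles of $H$ must share a vertex, and the presence of multiple vertices of degree $\ge 3$ tightly constrains how those cycles can be arranged; numerical invariants such as the closed $4$-walk count of $\mathcal{L}(H)$ (obtained by combining Lemmas \ref{AQAcospectral} and \ref{closedfourwalk}) together with $n_3(H)$ suffice to eliminate each remaining candidate.

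Finally, suppose $\mathrm{deg}(H)=(5,2^{n-2},1)$. Every component of $H$ that avoids the unique degree-$5$ vertex $v$ consists only of degree-$2$ vertices---since the pendant must share a component with its only neighbour---and hence is a cycle. Pairing such an extra cycle with any cycle inside $v$'s component (one exists because a tree containing a degree-$5$ vertex would, by a direct handshake computation, require at least five pendants, far exceeding the single pendant of $H$) would yield two vertex-disjoint cycles, contradicting Lemma \ref{no2cycles}. Thus $H$ is connected with cyclomatic number $2$; by Lemma \ref{no2cycles} its two independent cycles share a vertex, and that vertex must have degree at least $4$, which forces it to be $v$ itself. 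Removing $v$ leaves a forest whose components are paths (all remaining degrees are at most $2$), and four of the five edges incident to $v$ pair off to close the two cycles while the fifth extends along a path ending at the pendant. This is exactly the defining structure of a propeller graph.
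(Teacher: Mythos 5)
Your overall strategy is the same as the paper's: restrict $\deg(H)$ via Lemma \ref{Qlem1}, count triangles via Lemma \ref{Qtriangles}, exclude vertex-disjoint cycles via Lemma \ref{no2cycles}, and compare fourth spectral moments of line graphs via Lemmas \ref{AQAcospectral} and \ref{closedfourwalk}. Your treatment of the sequence $(5,2^{n-2},1)$ is correct and in fact more detailed than the paper's one-line assertion (though your parenthetical reason that the pendant lies in the component of the degree-$5$ vertex is not the real one; the clean argument is that a component all of whose degrees are $2$ except for a single degree-$1$ vertex has odd degree sum).

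The genuine gap is your claim that for the sequences $(4^2,2^{n-4},1^2)$, $(4,3^3,2^{n-7},1^3)$ and $(3^6,2^{n-10},1^4)$ ``numerical invariants such as the closed $4$-walk count of $\mathcal{L}(H)$ together with $n_3(H)$ suffice to eliminate each remaining candidate.'' They do not. The paper carries out exactly this computation and finds that for the candidate $H_5$ with degree sequence $(4,3^3,2^{n-7},1^3)$ the fourth moment of $\mathcal{L}(H_5)$ coincides with that of $\mathcal{L}(G)$ in three parameter regimes (the value $368$ when $l=1$, $t=2$ against $p=q=4$, $k=1$; the value $6n+316$ when $l=1$, $t\geq 3$ against $p\geq q>4$, $k\geq 2$; and the value $6n+324$ when $l\geq 2$, $t=2$ against $p>q=4$, $k\geq 2$), and in those regimes the triangle counts give no further information. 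To finish, the paper has to invoke strictly finer invariants: a direct comparison of the two signless Laplacian characteristic polynomials (which in the first regime differ only in the coefficients of $x^3$, $x^2$ and $x$), the coefficient $q_{n-1}$ computed from $TU$-subgraphs via Lemma \ref{QTUgraph}, and the generating-function identity obtained from the substitution $x=(y+1)^2/y$. Your elimination scheme stalls precisely at these coincidences, so this step needs to be replaced rather than merely elaborated. A secondary remark: for the two sequences containing a $0$, both you and the paper reduce to a finite structural check on a graph of cyclomatic number $3$ with four or five triangles and no two vertex-disjoint cycles; you rightly flag this as the delicate bookkeeping step, and the paper's ``clearly'' does not spell it out either, so on that point you are no worse off than the source.
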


\begin{proof}
Since $H$ is $Q$-cospectral with $G$, by Lemma \ref{Qtriangles},
\begin{eqnarray}
6n_3(G)+\sum_{v \in V(G)}d_G(v)^3=6n_3(H)+\sum_{v \in V(G)}d_H(v)^3.\label{Qlem2E01}
\end{eqnarray}
Since $G$ is a propeller graph, by Lemma \ref{Qlem1}, the degree sequence of $H$ is given in (\ref{eq:Qlem1}). We consider the cases for $\mathrm{deg}(H)$ one by one. Note that $n_3(G) =0$, $1$ or $2$.

\medskip
\noindent\emph{Case 1.} $\mathrm{deg}(H) = (5,2^{n-2},1)$. It is straightforward to show that $H$ is a propeller graph.

\medskip
\noindent\emph{Case 2.} $\mathrm{deg}(H)=(4^2,2^{n-4},1^2)$. In this case, by (\ref{Qlem2E01}) we have $6n_3(G)+(8n+110)=6n_3(H)+(8n+98)$. Hence $n_3(H)=2, 3, 4$ depending on whether $n_3(G)=0, 1, 2$ respectively.

By Lemma \ref{no2cycles} and $\mathrm{deg}(H)=(4^2,2^{n-4},1^2)$, there are three possibilities for $H$ as shown in Fig. \ref{Qf3}. Note that for the $Q$-spectrum the multiplicity of 0 gives the number of bipartite components \cite{kn:Cvetkovic07}. Clearly, for $H_1$, there is an eigenvalue 0 in its $Q$-spectrum, but there is no eigenvalue 0 in the $Q$-spectrum of $G$, since $n_3(G)=1$, that is, $G$ is not bipartite. This is a contradiction, because $G$ and $H$ are not $Q$-cospectral.

If $H$ is isomorphic to $H_2$, then Lemma
\ref{AQAcospectral} implies that the line graphs $\mathcal {L}(G)$ and
$\mathcal {L}(H_2)$ are $A$-cospectral, that is $\sum\limits_i\lambda_i(\mathcal
{L}(G))^4= \sum\limits_i\lambda_i(\mathcal {L}(H_2))^4$. However, by Lemma
\ref{closedfourwalk}, this cannot happen by the following computation:
\begin{eqnarray*}
\sum\limits_i\lambda_i(\mathcal {L}(H_2))^4=\left\{\begin{array}{rll}
   310, & & \text{if $l=1$ and $t=1$};\\
  6n+276,& & \text{if $l\geq2$ and $t=1$};\\
  6n+276,& & \text{if $l=1$ and $t\geq2$};\\
  6n+284, & & \text{if $l\geq2$ and $t\geq2$};
  \end{array}\right.
\end{eqnarray*}
\begin{eqnarray}\label{4square}
\sum\limits_i\lambda_i(\mathcal {L}(G))^4=\left\{\begin{array}{rll}
  368, & & \text{if $p=q=4$ and $k=1$};\\
  6n+332,& & \text{if $p=q=4$ and $k\geq2$};\\
  6n+312, & & \text{if $p>q=4$ and $k=1$};\\
  6n+324, & & \text{if $p>q=4$ and $k\geq2$};\\
  6n+304, & & \text{if $p\geq q>4$ and $k=1$};\\
  6n+316, & & \text{if $p\geq q>4$ and $k\geq2$}.\\
  \end{array}\right.
\end{eqnarray}

If $H$ is isomorphic to $H_3$, similarly to the above case, $\sum\limits_i\lambda_i(\mathcal {L}(H_3))^4$ is computed as follows:
\begin{eqnarray*}
\sum\limits_i\lambda_i(\mathcal {L}(H_3))^4=\left\{\begin{array}{rll}
   328, & & \text{if $l=1$ and $t=1$};\\
  6n+300,& & \text{if $l\geq2$ and $t=1$};\\
  6n+300,& & \text{if $l=1$ and $t\geq2$};\\
  6n+308, & & \text{if $l\geq2$ and $t\geq2$}.
  \end{array}\right.
\end{eqnarray*}
Again, $\sum\limits_i\lambda_i(\mathcal
{L}(G))^4\neq\sum\limits_i\lambda_i(\mathcal {L}(H_3))^4$, a contradiction.

\begin{figure}[here]
\centering
\vspace{-0.3cm}
\includegraphics*[height=6cm]{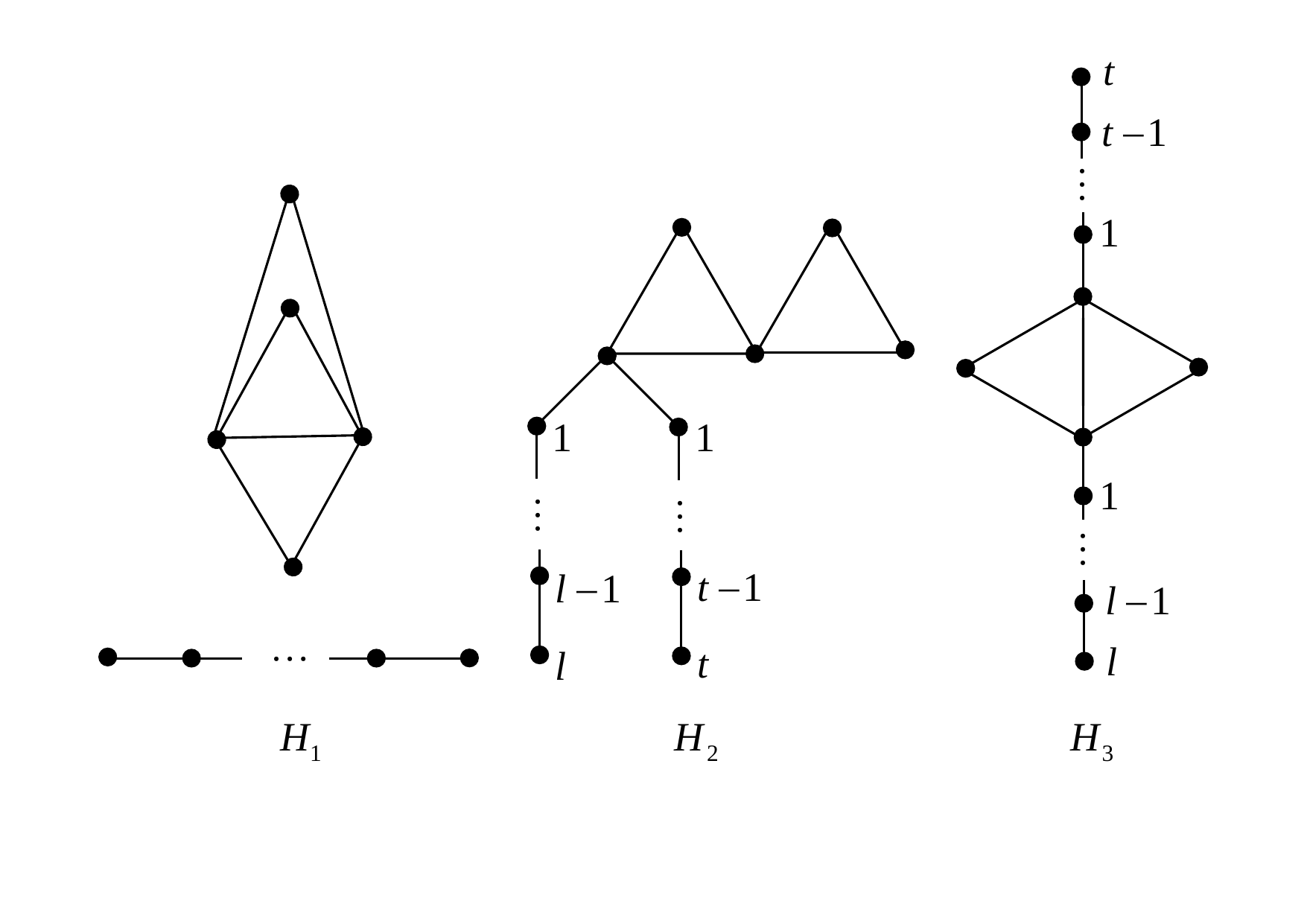}
\vspace{-1.2cm}
\caption{\small Proof of Lemma \ref{no3triangles}: Case 2.}
\label{Qf3}
\end{figure}

\begin{figure}[here]
\centering
\vspace{-0.3cm}
\includegraphics*[height=6cm]{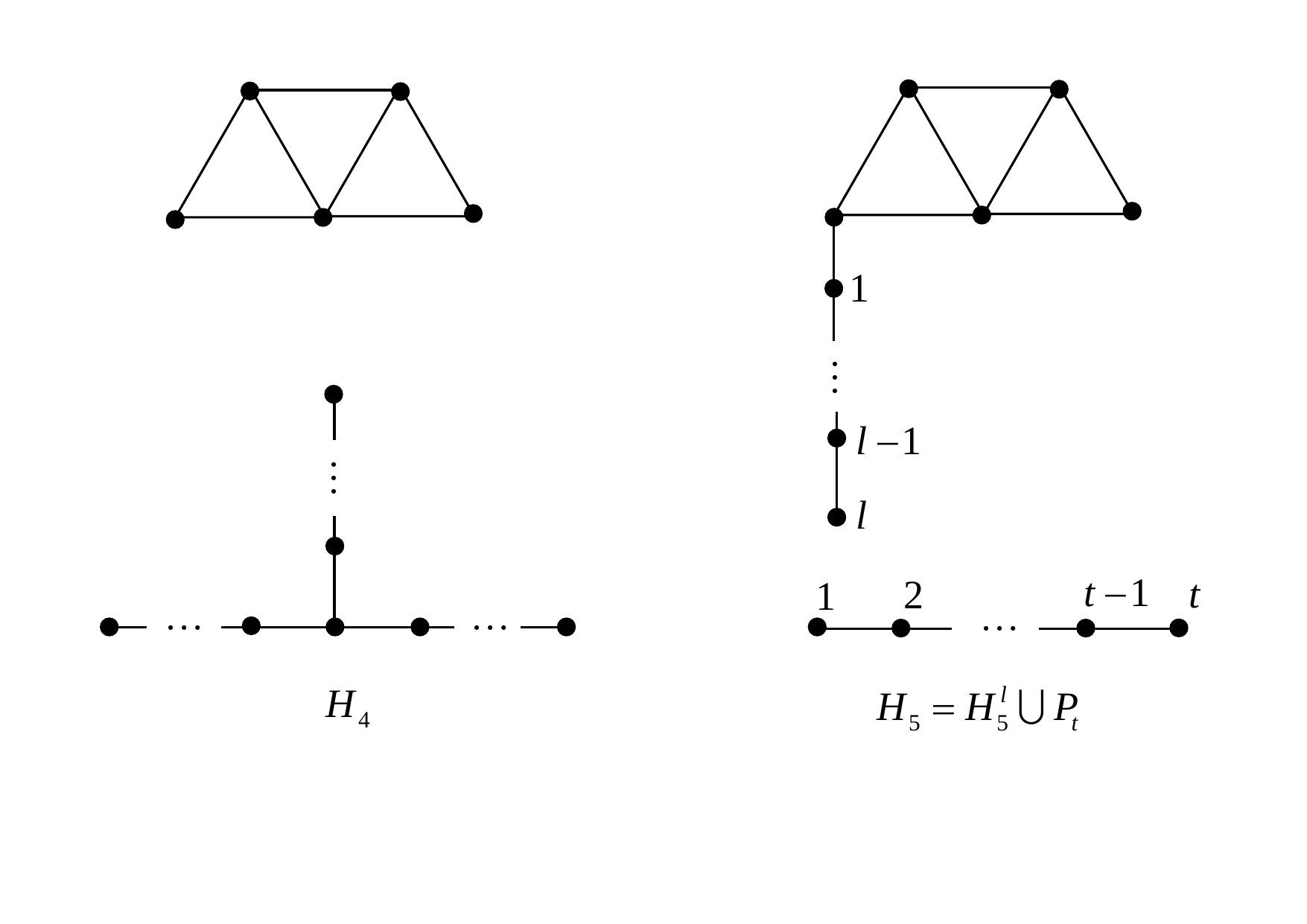}
\vspace{-1.4cm}
\caption{\small Proof of Lemma \ref{no3triangles}: Case 3.}
\label{Qf4}
\end{figure}
\medskip
\noindent\emph{Case 3.} $\mathrm{deg}(H)=(4,3^3,2^{n-7},1^3)$. In this case, by (\ref{Qlem2E01}), we have $6n_3(G)+(8n+110)=6n_3(H)+(8n+92)$. Hence $n_3(H)=3, 4, 5$ depending on whether $n_3(G)=0, 1, 2$ respectively. Again, by Lemma \ref{no2cycles} and $\mathrm{deg}(H)=(4,3^3,2^{n-7},1^3)$, there are two possibilities for $H$ as shown in Fig. \ref{Qf4}. If $H$ is isomorphic to $H_4$, then $\mathcal {S}(H)$ contains a subgraph isomorphic to a disjoint union of a cycle and the Smith graph $S_1$. This contradicts the fact $\lambda_2(\mathcal {S}(H))=\lambda_2(\mathcal {S}(G))<2$.

If $H$ is isomorphic to $H_5$, then Lemma
\ref{AQAcospectral} implies that the line graphs $\mathcal {L}(G)$ and
$\mathcal {L}(H_5)$ are $A$-cospectral, that is $\sum\limits_i\lambda_i(\mathcal
{L}(G))^4= \sum\limits_i\lambda_i(\mathcal {L}(H_5))^4$. By Lemma
\ref{closedfourwalk}, we have
\begin{eqnarray*}
\sum\limits_i\lambda_i(\mathcal {L}(H_5))^4=\left\{\begin{array}{rll}
   368, & & \text{if $l=1$ and $t=2$};\\
  6n+316,& & \text{if $l=1$ and $t\geq3$};\\
  6n+324, & & \text{if $l\geq2$ and $t=2$};\\
  6n+320, & & \text{if $l\geq2$ and $t\geq3$};
  \end{array}\right.
\end{eqnarray*}
By the above computation and (\ref{4square}), there exist three equal cases:

\medskip
\noindent\emph{Case 3.1.} 368: $H_5$ with $l=1$, $t=2$ and $G$ with $p=q=4$, $k=1$. With the help of Maple, we have
\begin{eqnarray*}
  \phi(Q(H_5);x) &=& x^8-18x^7+128x^6-468x^5+948x^4-1054x^3+584x^2-120x  ;\\
  \phi(Q(G);x) &=&  x^8-18x^7+128x^6-468x^5+948x^4-1056x^3+592x^2-128x.
\end{eqnarray*}
Clearly, $\phi(Q(H_5))\neq\phi(Q(G))$, a contradiction.

\medskip
\noindent\emph{Case 3.2.} $6n+316$: $H_5$ with $l=1$, $t\geq3$ and $G$ with $p\geq q>4$, $k\geq2$. Note that $H_5$ contains an eigenvalue 0 in its $Q$-spectrum. Then $p$ and $q$ must be even numbers no less than 6. By Lemma \ref{QTUgraph}, we have $q_{n-1}(G)=(-1)^{n-1}pqn$, and $q_{n-1}(H_5)=(-1)^{n-1}(60n-360)$. Then $q_{n-1}(G)=q_{n-1}(H_5)$ implies $36n=60n-360$ or $48n=60n-360$, since $q_{n-1}(G)>q_{n-1}(H_5)$ with $p\geq q\geq8$. In the former case, we have $n=15$. That is,  $H_5$ has 15 vertices with $l=1$, $t=9$, and $G$ has 15 vertices with $p=q=6$, $k=4$.  Note that for a bipartite graph $G'$, $\phi(Q(G'))=\phi(L(G'))$ \cite{kn:Cvetkovic07}. Thus, $\phi(Q(H_5))=\phi(Q(H_5^1))\phi(L(P_{9}))$, and $\phi(Q(G))=\phi(L(G))$. By Maple,  we obtain
\begin{eqnarray}\label{QH5pol1}
  \phi(Q(H_5^1);x)=x^6-16x^5+96x^4-276x^3+396x^2-262x+60.
\end{eqnarray}
Substituting $x=(y+1)^2/y$ into (\ref{QH5pol1}), then plugging (\ref{LaplacianPolwhole}) and (\ref{QH5pol1}) into the expression of $\phi(Q(H_5))$, and with the help of Maple, we obtain
\[y^{15}(y-1)^3(y+1)^2\phi(Q(H_5))+1-3y-4y^2+4y^{33}+3y^{34}-y^{35}=f_Q(H_5;y),\]
where
\begin{eqnarray*}
  f_Q(H_5;y) &=& 2y^{30}+2y^{29}+2y^{28}+2y^{25}+2y^{24}+2y^{23}-4y^{20}-3y^{19}+y^{18} \\
           & &-y^{17}+3y^{16}+4y^{15}-2y^{12}-2y^{11}-2y^{10}-2y^7-2y^6-2y^5.
\end{eqnarray*}
Substituting $p=q=6$ and $k=4$ back into (\ref{Ppolywhole}),  we have
\begin{eqnarray*}
  f_L(6,6,4;y) &=& -4y^{29}+4y^{27}+3y^{25}+3y^{24}+2y^{23}+6y^{22}+4y^{21}+4y^{20}-4y^{18} \\
           & &+4y^{17}-4y^{15}-4y^{14}-6y^{13}-2y^{12}-3y^{11}-3y^{10}-4y^8+4y^6.
\end{eqnarray*}
Thus, $f_Q(H_5;y)\neq f_L(6,6,4;y)$. This contradicts $\phi(Q(H_5))=\phi(Q(G))$.

In the latter case, we have $n=30$.  That is,  $H_5$ has 30 vertices with $l=1$, $t=24$, and $G$ has 30 vertices with $p=8$, $q=6$, $k=17$. Using the similar method to the former case, we have $f_Q(H_5;y)\neq f_L(8,6,17;y)$, which also contradicts $\phi(Q(H_5))=\phi(Q(G))$.

\medskip
\noindent\emph{Case 3.3.} $6n+324$: $H_5$ with $l\geq2$ and $t=2$ and $G$ with $p>q=4$ and $k\geq2$. Similarly to Case 3.2, $p$ must be even numbers no less than 6, and Lemma \ref{QTUgraph} implies that  $q_{n-1}(G)=(-1)^{n-1}4pn$, and $q_{n-1}(H_5)=(-1)^{n-1}120$. Clearly, $q_{n-1}(G)\neq q_{n-1}(H_5)$, a contradiction.

\medskip
\noindent\emph{Case 4.} $\mathrm{deg}(H)=(3^6,2^{n-10},1^4)$. In this case, (\ref{Qlem2E01}) yields $6n_3(G)+(8n+110)=6n_3(H)+(8n+86)$. Hence $n_3(H)=4, 5, 6$ depending on whether $n_3(G)=0, 1, 2$ respectively. By Lemma \ref{no2cycles}, there is no feasible $H$ satisfying $\mathrm{deg}(H)=(3^6,2^{n-10},1^4)$.

\medskip
\noindent\emph{Case 5.} $\mathrm{deg}(H)=(4,3^2,2^{n-4},0)$. In this case there is an eigenvalue 0 in the $Q$-spectrum of $H$. This implies that $G$ must be bipartite and so $n_3(G)=0$. By (\ref{Qlem2E01}), we have
$6n_3(G)+(8n+110)=6n_3(H)+(8n+86)$, which gives $n_3(H)=4$.
Clearly, by Lemma \ref{no2cycles}, there are no feasible $H$ satisfying $\mathrm{deg}(H)=(4,3^2,2^{n-4},0)$.

\medskip
\noindent\emph{Case 6.} $\mathrm{deg}(H)=(3^5,2^{n-7},1,0)$. Similar to Case 5, we have $n_3(G)=0$. Again, by (\ref{Qlem2E01}), we have $n_3(H)=4$. Lemma \ref{no2cycles} implies that there is no feasible $H$ satisfying $\mathrm{deg}(H)=(3^5,2^{n-7},1,0)$.

The proof is complete.
\qed\end{proof}

\begin{Tproof} \textbf{of Theorem \ref{thm2}.}~~
The result follows from Lemmas \ref{Anonprop} and \ref{no3triangles} immediately.
\qed \end{Tproof}

\section{Conclusion}

In this paper, we proved that any propeller graph is determined by its $L$-spectrum as well as its $Q$-spectrum. Along the way we showed that no two non-isomorphic propeller graphs are $A$-cospectral (Lemma \ref{Anonpropepeller}). We expect that this result could be used to prove some propeller graphs are $A$-DS. On the other hand, not every propeller graph is determined by its $A$-spectrum. For example, in \cite[pp.12]{kn:Biggs93} and \cite[pp.1226]{kn:Lu09}, two $A$-cospectral mates are given. And we expect that there are more graphs that are $A$-cospectral with propeller graphs. It would be an interesting question to characterize which graphs are $A$-cospectral with propeller graphs.

\bigskip

\noindent\textbf{Acknowledgements}\vspace{0.3cm}

We appreciate the anonymous referees for their comments and suggestions. X. Liu is supported by MIFRS and MIRS of the University of Melbourne and the Natural Science Foundation of China (No.11361033). S. Zhou is supported by a Future Fellowship (FT110100629) of the Australian Research Council.

\end{document}